\newtheorem{proposition}{Proposition}[section]
\newtheorem{theorem}[proposition]{Theorem}
\newtheorem{prop}[proposition]{Proposition}
\newtheorem{conj}[proposition]{Conjecture}
\theoremstyle{definition}
\newtheorem{example}[proposition]{Example}
\theoremstyle{remark}
\newtheorem{remark}[proposition]{Remark}
\newcommand{\set}[1]{{\left\lbrace #1 \right\rbrace}}
\renewcommand{\H}{\mathcal{H}}
\newcommand{\gRec}{{\operatorname{gRec}}}
\renewcommand{\th}{^{\mbox{\footnotesize th}}}
\newcommand{\covered}{\lessdot}
\newcommand{\circlenum}[1]{
\raisebox{-2pt}{
\begin{picture}(10,10)
\put(5,5){\circle{10}}
\put(0.85,2.8){\scriptsize #1}
\end{picture}
}}
\newcommand{\margincolor}{Red}
\newcounter{margincounter}
\newcommand{\marginnum}{\textcolor{\margincolor}{\begin{picture}(0,0)\put(5,3){\circle{13}}\end{picture}\arabic{margincounter}}}
\newcommand{\margin}[1]
{\marginnum\marginpar{\textcolor{\margincolor}{\arabic{margincounter}} \tiny #1}\addtocounter{margincounter}{1}}
\newcommand{\proofread}
{
\ifthenelse{\isundefined{\margin}}
{
\special{!userdict begin /bop-hook{1.2 1.2 scale -51 -60 translate}def end}
}
{
\special{!userdict begin /bop-hook{1.2 1.2 scale -91 -60 translate}def end}
\setboolean{@mparswitch}{false} 
\addtolength{\marginparwidth}{-3mm}
}
}
\author{Nathan Reading}
\address{Department of Mathematics, North Carolina State University, Raleigh, NC, USA}
\subjclass[2010]{Primary 05A05, 05A19, 05B45}
\thanks{\textbf{Acknowledgments:}
Thanks to Shirley Law for helpful conversations.
Nathan Reading was partially supported by NSA grant H98230-09-1-0056.}
\title{Generic rectangulations}
\begin{document}

\begin{abstract}
A rectangulation is a tiling of a rectangle by a finite number of rectangles.
The rectangulation is called generic if no four of its rectangles share a single corner.
We initiate the enumeration of generic rectangulations up to combinatorial equivalence by establishing an explicit bijection between generic rectangulations and a set of permutations defined by a pattern-avoidance condition analogous to the definition of the twisted Baxter permutations.
\end{abstract}


\maketitle

\setcounter{tocdepth}{2}
\tableofcontents

\section{Introduction}

The main characters in this paper are tilings of a rectangle by finitely many rectangles.
A \emph{cross} in such a tiling is a point which is a corner of four distinct tiles.
Fixing a rectangle $S$ and considering the space of all tilings of $S$ by $n$ rectangles, with a uniform probability measure, the set of tilings having one or more crosses has measure zero.
Thus we call a tiling \emph{generic} if it has no crosses.

We consider generic tilings up to the natural combinatorial equivalence relation which we now describe.
We orient $S$ so that its edges are vertical and horizontal.
A rectangle $U$ in a tiling $R$ is \emph{below} a rectangle $V$ if the top edge of $U$ intersects the bottom edge of $V$ (necessarily in a line segment rather than in a point).
Similarly, $U$ is \emph{left of} $V$ if the right edge of $U$ intersects the left edge of $V$.
A tiling $R$ of a rectangle $S$ is combinatorially equivalent to a tiling $R'$ of a rectangle $S'$ if there is a bijection from the rectangles of $R$ to the rectangles of $R'$ that exactly preserves the relations ``below'' and ``left of.'' 
A \emph{generic rectangulation} is the equivalence class of a generic tiling.
We will often blur the distinction between generic rectangulations (i.e.\ equivalence classes) and equivalence class representatives, in particular specifying an equivalence class by describing a specific tiling.

Our main result is a bijection between generic rectangulations with $n$ rectangles and a class of permutations in $S_n$ that we call \emph{$2$-clumped permutations}.
These are the permutations that avoid the patterns $3$-$51$-$2$-$4$, $3$-$51$-$4$-$2$, $2$-$4$-$51$-$3$, and $4$-$2$-$51$-$3$, in the notation of Babson and Steingr\'{\i}msson~\cite{BaSt}, which is explained in Section~\ref{clumped sec}.
The author's counts of generic rectangulations, for small $n$, are shown in Table~\ref{comput table}.

\begin{figure}
\begin{tabular}{rrr}
\toprule
$n$&&Generic rectangulations\\
\midrule
1&&1\\
2&&2\\
3&&6\\
4&&24\\
5&&116\\
6&&642\\
7&&3,938\\
8&&26,194\\
9&&186,042\\
10&&1,395,008\\
11&&10,948,768\\
12&&89,346,128\\
13&&754,062,288\\
14&&6,553,942,722\\
\bottomrule
\end{tabular}
\qquad\quad
\begin{tabular}{rrr}
\toprule
$n$&&Generic rectangulations\\
\midrule
15&&58,457,558,394\\
16&&533,530,004,810\\
17&&4,970,471,875,914\\
18&&47,169,234,466,788\\
19&&455,170,730,152,340\\
20&&4,459,456,443,328,824\\
21&&44,300,299,824,885,392\\
22&&445,703,524,836,260,400\\
23&&4,536,891,586,511,660,256\\
24&&46,682,404,846,719,083,048\\
25&&485,158,560,873,624,409,904\\
26&&5,089,092,437,784,870,584,576\\
27&&53,845,049,871,942,333,501,408\\
\\
\bottomrule
\end{tabular}
\captionsetup{name=Table}
\caption{The number of generic rectangulations with $n$ rectangles}
\label{comput table}
\end{figure}

We define $k$-clumped permutations in Section~\ref{clumped sec}.
For now, to place the $2$-clumped permutations in context, we note that the $1$-clumped permutations are the \emph{twisted Baxter permutations}, which are in bijection with the better-known \emph{Baxter permutations}.
Baxter permutations are also relevant to the combinatorics of rectangulations.
Indeed, Baxter permutations are in bijection \cite{ABP2,YCCG} with the \emph{mosaic floorplans} considered in the VLSI (Very Large Scale Integration) circuit design literature~\cite{HHCGDCG}.
Mosaic floorplans are certain equivalence classes of generic rectangulations.
(A similar result linking equivalence classes of generic rectangulations to pattern-avoiding permutations is given in \cite{ABBMP}.)
In light of results of \cite{ABP}, the bijection from Baxter permutations to mosaic floorplans can be rephrased as a bijection to a subclass of the generic rectangulations that we call \emph{diagonal rectangulations}, which figure prominently in this paper.

The symbol $G_n$ will denote the set of $2$-clumped permutations.
Let $\gRec_n$ be the set of generic rectangulations with $n$ rectangles.
The bijection from $G_n$ to $\gRec_n$ is defined as the restriction of a map $\gamma:S_n\to\gRec_n$.
We show that $\gamma$ is surjective and that its fibers are the congruence classes of a lattice congruence on the weak order on~$S_n$.
We do not prove directly that the fibers of $\gamma$ define a congruence.
Instead, we recognize the fibers as the classes of a congruence arising as one case of a construction from~\cite{con_app}, where lattice congruences on the weak order are used to construct sub Hopf algebras of the Malvenuto-Reutenauer Hopf algebra of permutations.
The results of~\cite{con_app} show that the $2$-clumped permutations are a set of congruence class representatives.
Thus the restriction of $\gamma$ is a bijection from $G_n$ to $\gRec_n$.

\subsection*{Note added in proof}
After this paper was accepted, the author became aware of a substantial literature studying generic rectangulations under the name \emph{rectangular drawings}.
This literature includes some results on asymptotic enumeration as well as computations of the exact cardinality of $\gRec_n$ for many values of $n$.
See, for example, \cite{ANY,FT,ITF}.
In particular, the main result of this paper answers an open question posed in \cite[Section~5]{ANY}.

\section{Clumped permutations}\label{clumped sec}
In this section, we define $k$-clumped permutations.
We begin with a review of generalized pattern avoidance in the sense of Babson and Steingr\'{\i}msson~\cite{BaSt}.
Let $y=y_1\cdots y_k\in S_k$, and let $\tilde{y}$ be a word created by inserting a dash between some letters of $y_1\cdots y_k$, with at most one dash between each adjacent pair.
A subsequence $x_{i_1}\cdots x_{i_k}$ of $x_1\cdots x_n$ is an \emph{occurrence} of the pattern $\tilde{y}$ in a permutation $x\in S_n$ if the following two conditions are satisfied:
First, for all $j,l\in[k]$ with $j<l$, the inequality $x_{i_j}<x_{i_l}$ holds if and only if $y_i<y_l$ holds.
Second, if $y_j$ and $y_{j+1}$ are not separated by a dash in $\tilde{y}$, then $i_{j}=i_{j+1}-1$.
That is, the dashes indicate which elements of the subsequence are not required to be adjacent in $x$.
For example, the subsequence $4512$ of $45312\in S_5$ is an occurrence of the pattern $3$-$4$-$1$-$2$, or an occurrence of the pattern $34$-$12$, but not an occurrence of the pattern $3$-$41$-$2$.
If there is no occurrence of the pattern $\tilde{y}$ in $x$, then we say that $x$ \emph{avoids} $\tilde{y}$.

To define $k$-clumped permutations, we first consider the \emph{twisted Baxter permutations}, defined in~\cite{con_app} and shown in unpublished notes by West~\cite{West pers} to be in bijection with Baxter permutations.
A published proof can be found in \cite{rectangle} or \cite{Giraudo}.
The twisted Baxter permutations are the permutations that avoid the patterns $2$-$41$-$3$ and $3$-$41$-$2$.
This pattern-avoidance condition on a permutation $x=x_1\cdots x_n$ can be rephrased as follows:
For every descent $x_i>x_{i+1}$, the values strictly between $x_{i+1}$ and $x_i$ are either all to the left of $x_i$ or all to the right of $x_{i+1}$.
(The Baxter permutations are defined by a similar condition:  They are the permutations avoiding $3$-$14$-$2$ and $2$-$41$-$3$.)

In any permutation $x$, we define a \emph{clump} associated to a descent $x_i>x_{i+1}$ to be a nonempty maximal sequence of consecutive values strictly between $x_i$ and $x_{i+1}$, all of which are on the same side of the entries $x_ix_{i+1}$.
No requirement is made on the positions, relative to each other, of the values in the clump.
For example, in the permutation $269153847\in S_9$, there are four clumps associated to the descent $9>1$, namely $2$, $345$, $6$, and $78$.

The pattern avoidance condition defining twisted Baxter permutations is that each descent $x_i>x_{i+1}$ has at most one clump, so we refer to twisted Baxter permutations as $1$-clumped permutations.
More generally, a \emph{$k$-clumped permutation} is a permutation $x$ such that each descent $x_i>x_{i+1}$ has at most $k$ clumps.
One can easily rephrase the definition of $k$-clumped permutations in terms of generalized patterns avoidance (avoiding $2\left(\frac{k}{2}\right)!\left(\frac{k}{2}+1\right)!$ generalized patterns if $k$ is even or $2\left(\frac{k+1}{2}\right)!\left(\frac{k+1}{2}\right)!$ generalized patterns if $k$ is odd).
For example, the $2$-clumped permutations, which play the central role in this paper, are the permutations avoiding $3$-$51$-$2$-$4$, $3$-$51$-$4$-$2$, $2$-$4$-$51$-$3$, and $4$-$2$-$51$-$3$.
By convention, the only $(-1)$-clumped permutation is the identity.
The $0$-clumped permutations are the permutations such that if $x_i>x_{i+1}$ then $x_i-1=x_{i+1}$.
Equivalently, they are the permutations avoiding $31$-$2$ and $2$-$31$.
These permutations in $S_n$ are in bijection with subsets of $\set{1,2,\cdots n-1}$.
The $3$-clumped permutations appear not to have been considered before.
For $n$ from $1$ to $9$, the numbers of $3$-clumped permutations are $1$, $2$, $6$, $24$, $120$, $712$, $4804$, $35676$ and $284816$.

The \emph{weak order} on $S_k$ is a lattice whose cover relations are $x\covered y$ with $x=x_1\cdots x_k$ and $y=y_1\cdots y_k$ such that $x_i=y_{i+1}<y_i=x_{i+1}$ for some $i\in[k-1]$, with $x_j=y_j$ for $j\not\in\set{i,i+1}$.
A \emph{join-irreducible} permutation is a permutation $x\in S_k$ with exactly one descent, meaning that, for some index~$i\in[k-1]$, we have $x_i>x_{i+1}$ but $x_{j}<x_{j+1}$ for every $j\in[k-1]$ with $j\neq i$.
(Such a permutation is join-irreducible in the weak order in the usual lattice-theoretic sense.)

We now review a construction from \cite[Section~9]{con_app}.
A join-irreducible element $x\in S_k$ is called \emph{untranslated} if its unique descent $x_i>x_{i+1}$ has $x_i=k$ and $x_{i+1}=1$.
In this case, a \emph{scramble} of $x$ is any permutation $y$ such that $y_i=k$, $y_{i+1}=1$ and every entry $j$ with $1<j<k$ occurs to the left of position $i$ in $x$ if and only if it occurs to the left of position $i$ in $y$.
Let $y$ be a scramble of $x$ and let~$\tilde{y}$ be obtained from $y$ by inserting a dash between each pair of consecutive entries except between $k$ and $1$.
We say that the scramble $y$ of $x$ occurs \emph{with adjacent cliff} if the pattern $\tilde{y}$ occurs.

Let $C$ be any collection of untranslated join-irreducible elements in $S_k$, with $k$ varying, so that, for example, $C$ may be $\set{312,2413}$.
The following is essentially \cite[Theorem~9.3]{con_app}.
\begin{theorem}\label{H fam combin}
For each $n$, there exists a unique congruence $\H(C)_n$ on the weak order on $S_n$ with the following properties:
\begin{enumerate}
\item[(i) ] A permutation $z$ is the minimal element in its $\H(C)_n$-class if and only if, for every $x\in C$ and all scrambles $y$ of $x$, the permutation $z$ avoids occurrences of $y$ with adjacent cliff.
\item[(ii) ] Suppose $w\covered z$ in the weak order, and let $z_i$ and $z_{i+1}$ be the adjacent entries of $z$ that are swapped to convert $z$ to $w$, with $z_i>z_{i+1}$.
Then $w\equiv z$ modulo $\H(C)_n$ if and only there exists $x\in C$, a scramble $y\in S_k$ of $x$, and an occurrence of $\tilde{y}$ in $z$ such that the entry of $z$ corresponding to the entry $k$ in $\tilde{y}$ is $z_i$ and the entry of $z$ corresponding to $1$ in $y$ is $z_{i+1}$.
\end{enumerate}
\end{theorem}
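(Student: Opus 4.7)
The plan is to reduce this to the general framework for lattice congruences on the weak order on $S_n$, following the strategy of \cite{con_app}. I would first dispatch uniqueness: any lattice congruence on a finite lattice is determined by the set of cover relations it contracts, and condition (ii) specifies this set exactly. Hence at most one congruence on $S_n$ can satisfy (ii), and the real tasks are to prove existence and to verify property (i).

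For existence, I would define $\H(C)_n$ as the smallest equivalence relation on $S_n$ under which the cover relations described in (ii) are identifications, and then verify that this equivalence relation is a lattice congruence. The standard criterion asserts that a set $\Sigma$ of cover relations in the weak order generates a lattice congruence if and only if $\Sigma$ is closed under the \emph{forcing} relation, which in turn is generated by the local structure of rank-two intervals (squares and pentagons). The task therefore reduces to showing that the set $\Sigma$ prescribed by (ii) is forcing-closed. Concretely, given $w\covered z$ in $\Sigma$ witnessed by some $x\in C$, a scramble $y$ of $x$, and an occurrence of $\tilde y$ in $z$, one must show that every cover relation forced by $w\covered z$ is again witnessed by the same $x$ together with some (possibly different) scramble $y'$ of $x$. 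I expect this to be the main obstacle: the scramble/adjacent-cliff formalism is engineered precisely to provide this flexibility, but one must perform a careful case analysis on how the occurrence of $\tilde y$ in $z$ overlaps the positions affected by the forced cover, and then track how the pattern occurrence transforms under an adjacent transposition at a nearby or distant position.

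Finally, property (i) follows from (ii) directly. A permutation $z$ is minimal in its $\H(C)_n$-class if and only if no cover relation $w\covered z$ belongs to $\Sigma$. By (ii), a cover at a descent $z_i>z_{i+1}$ lies in $\Sigma$ if and only if the pair $(z_i,z_{i+1})$ realizes the ``cliff'' $(k,1)$ of an occurrence in $z$ of $\tilde y$ for some $x\in C$ and some scramble $y$ of $x$. Thus $z$ is minimal in its class if and only if no such cliff-realizing occurrence exists in $z$, which is exactly the avoidance condition in (i). Having established uniqueness, closure under forcing, and this minimality characterization, the theorem follows.
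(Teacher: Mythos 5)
Your uniqueness argument and your derivation of (i) from (ii) are fine, and they coincide with what the paper actually says: congruence classes on a finite lattice are intervals, so a congruence is determined by the cover relations it contracts, and $z$ is minimal in its class if and only if no cover $w\covered z$ is contracted, which translates the cliff condition of (ii) into the avoidance condition of (i). The genuine gap is in the existence step, which is the entire substance of the theorem. You define $\H(C)_n$ as the equivalence generated by contracting the covers described in (ii) and then say the task reduces to verifying that this edge set is closed under forcing, explicitly deferring the ``careful case analysis'' and merely expecting the scramble formalism to supply the needed flexibility. That case analysis is not a routine check: one must show that every contraction forced (through the polygons of the weak order, which are squares and hexagons, not pentagons) by an edge witnessed by some $x\in C$ and scramble $y$ is again witnessed by an element of $C$ with some scramble, and until that is carried out you have not shown that any congruence with property (ii) exists, nor that the generated congruence does not contract additional covers beyond those listed in (ii) --- the ``only if'' direction of (ii) is exactly what forcing-closure must deliver, and it is the part most likely to fail without the structural input from \cite{con_app}.

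For comparison, the paper does not prove the theorem internally at all: it obtains it as essentially \cite[Theorem~9.3]{con_app}, where the congruence is constructed (as part of a translational and insertional family of congruences generated by contracting the join-irreducible scrambles) and where the forcing analysis you are deferring is actually performed; property (i) is a direct restatement of that theorem and property (ii) is the key point in its proof. So your outline is a reasonable plan for a from-scratch proof, but as written it assumes the hard step rather than proving it, whereas the paper discharges that step by citation.
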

In \cite{con_app}, the congruence $\H(C)_n$ is constructed for the purpose of building combinatorial Hopf algebras.
Here, we can take Theorem~\ref{H fam combin} as the definition of $\H(C)_n$.
Property (i) in Theorem~\ref{H fam combin} is a direct restatement of \cite[Theorem~9.3]{con_app}, while property (ii) is the key point in the proof of \cite[Theorem~9.3]{con_app}. 
It is easy and well-known that in a congruence on a finite lattice, each congruence class is an interval.
Thus a congruence is uniquely determined by the set of cover relations $w\covered z$ such that $w\equiv z$.
Furthermore, the minimal permutations described in Property (i) are a system of congruence class representatives.

Let $\Gamma$ be the congruence $\H(\set{35124,24513})_n$ on $S_n$.
Theorem~\ref{H fam combin} specializes to the following:
\begin{prop}\label{Theta g props}

\noindent
\begin{enumerate}
\item \label{Theta g bottoms}
A permutation is the minimal element in its $\Gamma$-class if and only if it is a $2$-clumped permutation.
\item \label{Theta g moves}
Suppose $x\covered y$ in the weak order, and let $e$ and $a$ be the adjacent entries that are swapped to convert $y$ to $x$, with $a<e$.
Then $x\equiv y$ modulo $\Gamma$ if and only if there are entries $b$, $c$, and $d$ in $y$ with $a<b<c<d<e$ such that $b$ and $d$ are on the same side of $ea$, while $c$ is on the other side of $ea$.
\end{enumerate}
\end{prop}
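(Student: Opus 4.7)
The plan is to specialize Theorem~\ref{H fam combin} to the set $C=\set{35124,24513}$; both parts of the proposition then reduce to enumerating the scrambles of these two untranslated join-irreducibles and unpacking the resulting pattern conditions.

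First I would list all scrambles. The permutation $35124$ is untranslated, with cliff $51$ at positions $2$--$3$; the entry $3$ lies to the left of the cliff and the entries $\set{2,4}$ to the right, so permuting $\set{2,4}$ within the two positions on the right gives exactly the scrambles $35124$ and $35142$. The permutation $24513$ has cliff $51$ at positions $3$--$4$ with $\set{2,4}$ on the left and $3$ on the right, yielding scrambles $24513$ and $42513$. Inserting a dash between every pair of consecutive entries except between the $5$ and the $1$ produces the four dashed patterns $3$-$51$-$2$-$4$, $3$-$51$-$4$-$2$, $2$-$4$-$51$-$3$, and $4$-$2$-$51$-$3$---precisely the patterns in the definition of $2$-clumped permutations in Section~\ref{clumped sec}. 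Part~(\ref{Theta g bottoms}) is then immediate from Theorem~\ref{H fam combin}(i).

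For part~(\ref{Theta g moves}), Theorem~\ref{H fam combin}(ii) says that $x\equiv y$ modulo $\Gamma$ if and only if some occurrence in $y$ of one of these four dashed patterns has its $5$-entry equal to $e$ and its $1$-entry equal to $a$. The three remaining entries of such an occurrence match the pattern values $2$, $3$, $4$, and so give three values $b<c<d$ strictly between $a$ and $e$. Inspecting the four patterns, in every case the pattern values $2$ and $4$ sit on the same side of the cliff while $3$ sits on the opposite side; translated back into $y$, this says $b$ and $d$ are on the same side of $ea$ and $c$ is on the other. The converse is equally routine: given any $a<b<c<d<e$ in $y$ with $b,d$ on one side of $ea$ and $c$ on the opposite side, one of the four patterns occurs with cliff $ea$, the particular pattern being determined by which side contains $\set{b,d}$ and by the relative left-to-right order of $b$ and $d$ in $y$.

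The whole argument is essentially bookkeeping. The one place where a small amount of care is needed is in checking that the four possible side-configurations of $\set{b,c,d}$ relative to $ea$ correspond bijectively to the four listed dashed patterns, and that the scramble enumeration above is exhaustive; no conceptual difficulty remains once Theorem~\ref{H fam combin} is invoked.
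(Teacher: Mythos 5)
Your proposal is correct and takes essentially the same route as the paper: the paper offers no separate argument, simply asserting that Theorem~\ref{H fam combin} specializes to this statement for $C=\set{35124,24513}$, and your enumeration of the scrambles, the resulting dashed patterns, and the translation of condition (ii) into the $b,c,d$ statement is exactly the routine verification that specialization entails (with the pattern-avoidance description of $2$-clumped permutations taken from Section~\ref{clumped sec}, as the paper itself does).
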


More generally, for each $k\ge -1$, there is a congruence described by Theorem~\ref{H fam combin} such that the minimal elements of congruence classes are exactly the $k$-clumped permutations.

\section{The map from permutations to generic rectangulations}
In this section, we define a map $\gamma$ from $S_n$ to $\gRec_n$.
We will see, in Section~\ref{main sec}, that $\gamma$ restricts to a bijection from the set of $2$-clumped permutations to $\gRec_n$.
The key point in the proof that $\gamma$ restricts to a bijection will be the fact that its fibers are the congruence classes of the congruence $\Gamma$ defined at the end of Section~\ref{clumped sec}.

To define the map $\gamma$, we first consider a smaller class of rectangulations which we call \emph{diagonal rectangulations} and a map from permutations to diagonal rectangulations.
The \emph{diagonal} of the underlying rectangle $S$ is the line segment connecting the top-left corner of $S$ to the bottom-right corner of $S$.
Recall that each rectangulation is a combinatorial equivalence class.
A rectangulation is a diagonal rectangulation if it has a representative in which each rectangle's interior intersects the diagonal.
A diagonal rectangulation is in particular a generic rectangulation, because if any four rectangles have a common vertex, it is impossible for all of their interiors to intersect the diagonal.
Diagonal rectangulations have been considered under other names, for example in~\cite{ABP,DGStack,FFNO}.

We now review, from~\cite{rectangle}, the definition of a map $\rho$ from permutations to diagonal rectangulations.
Maps closely related to $\rho$ have appeared prior to~\cite{rectangle}, for example in~\cite{ABP,FFNO}.
To define $\rho$, first draw $n+1$ distinct \emph{diagonal points} on the diagonal of $S$, with one of the points being the top-left corner of $S$ and another being the bottom-right corner of $S$.
Number the spaces between the diagonal points as $1,2,\ldots,n$, from top-left to bottom-right.
Given $x\in S_n$, read the sequence $x_1\cdots x_n$ from left to right and draw a rectangle for each entry according to the following recursive procedure:

Let $T$ be the union of the left and bottom edges of $S$ with the rectangles drawn in the first $i-1$ steps of the construction.
It will be apparent by induction that $T$ is left- and bottom-justified.
To draw the $i\th$ rectangle, consider the label $x_i$ on the diagonal.
If the diagonal point $p$ immediately above/left of the label $x_i$ is not in $T$, then the top-left corner of the new rectangle is the rightmost point of $T$ that is directly left of $p$.
If $p$ is in $T$ (necessarily on the boundary of~$T$), then the top-left corner of the new rectangle is the highest point of $T$ directly above $p$.
If the diagonal point $p'$ immediately below/right of the label $x_i$ is not in~$T$, then the bottom-right corner of the new rectangle is the highest point of $T$ that is directly below $p'$.
If $p'$ is in $T$ then the bottom-right corner of the new rectangle is the rightmost point of $T$ that is directly to the right of $p'$.

\begin{example}\label{rho ex}
Figure~\ref{rho fig} illustrates the map $\rho$.
In each step, the new rectangle is shown in red (the darker gray when not viewed in color), and the set $T$ consists of the white rectangles together with the left and bottom edges of $S$.
The part of $S$ not covered by rectangles is shaded in light gray.
\end{example}
\setcounter{figure}{0}
\begin{figure}
\begin{tabular}{ccccc}
\scalebox{.85}{\includegraphics{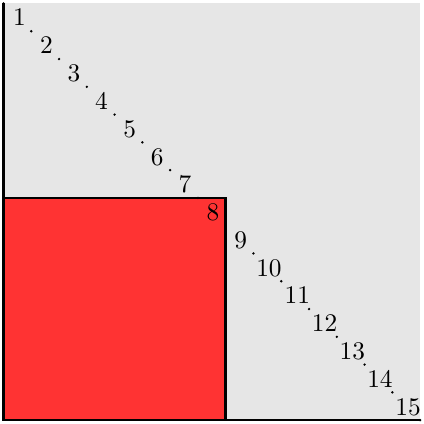}}&&
\scalebox{.85}{\includegraphics{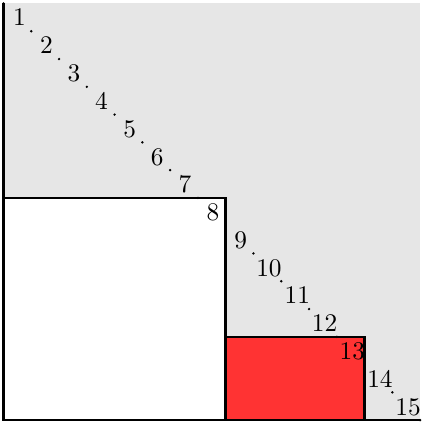}}&&
\scalebox{.85}{\includegraphics{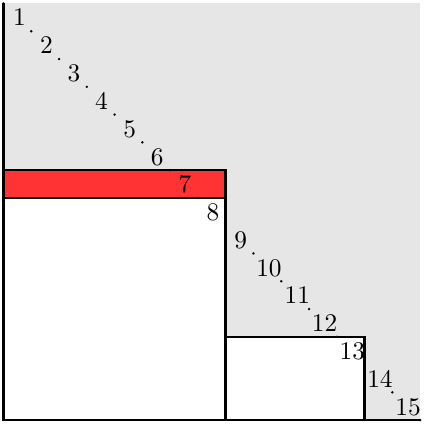}}\\[6.5 pt]
\scalebox{.85}{\includegraphics{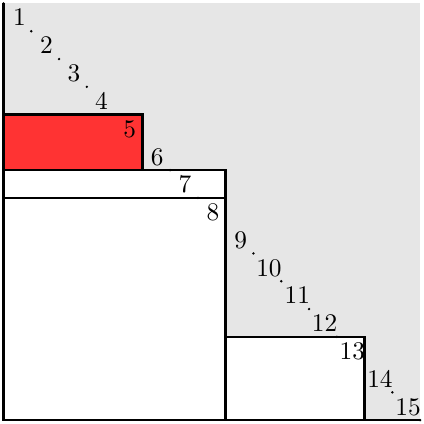}}&&
\scalebox{.85}{\includegraphics{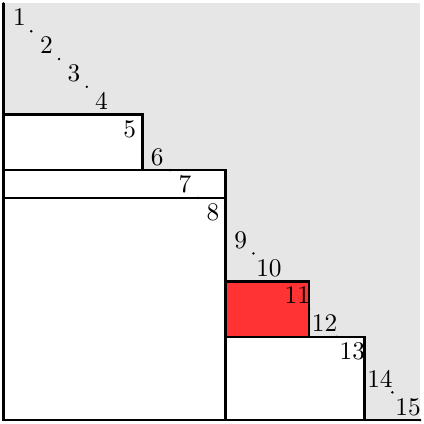}}&&
\scalebox{.85}{\includegraphics{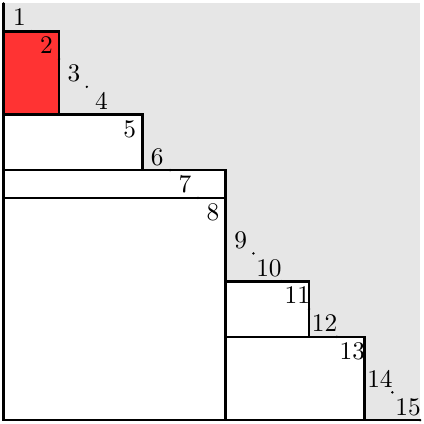}}\\[6.5 pt]
\scalebox{.85}{\includegraphics{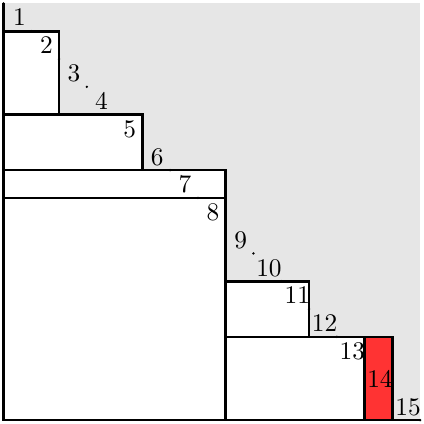}}&&
\scalebox{.85}{\includegraphics{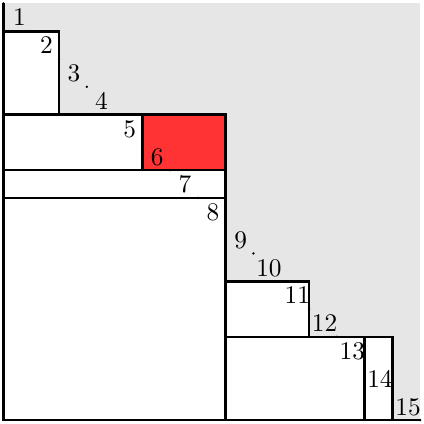}}&&
\scalebox{.85}{\includegraphics{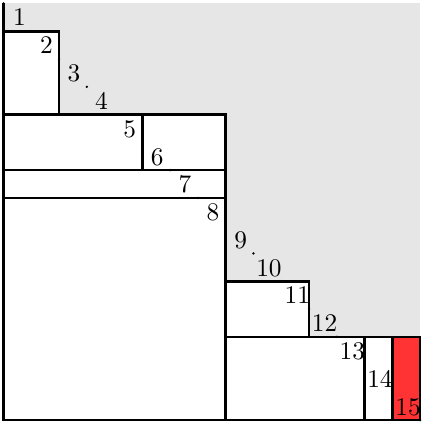}}\\[6.5 pt]
\scalebox{.85}{\includegraphics{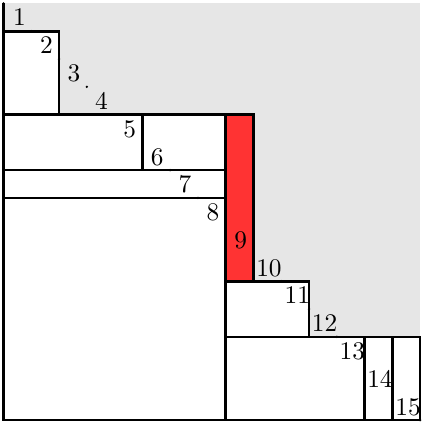}}&&
\scalebox{.85}{\includegraphics{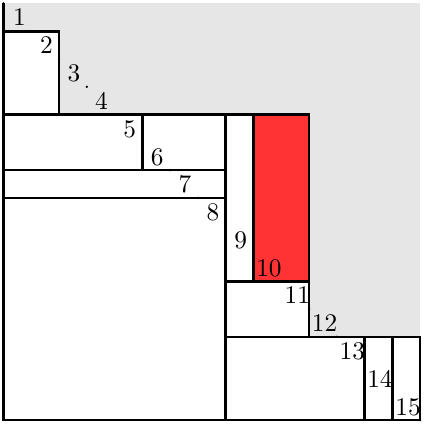}}&&
\scalebox{.85}{\includegraphics{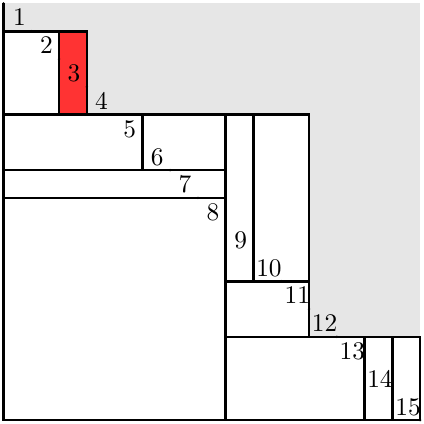}}\\[6.5 pt]
\scalebox{.85}{\includegraphics{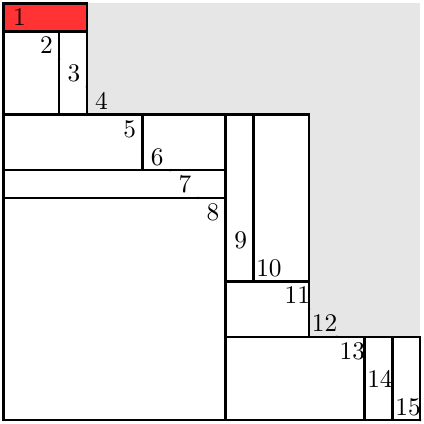}}&&
\scalebox{.85}{\includegraphics{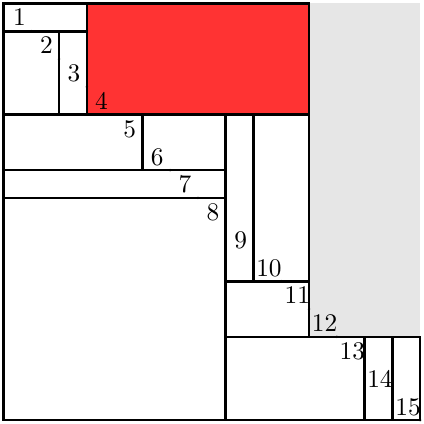}}&&
\scalebox{.85}{\includegraphics{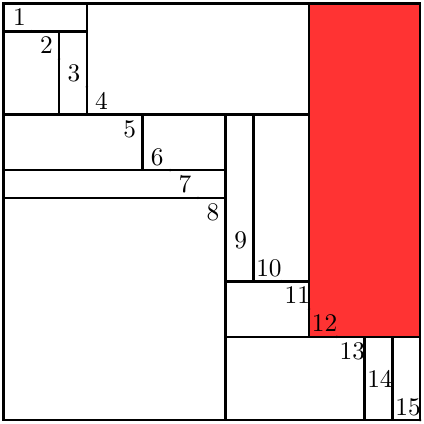}}
\end{tabular}
\caption{Steps in the construction of $\rho(8\;\!\!\!\protect\circlenum{13}\;\!\!\!75\;\!\!\!\protect\circlenum{11}\;\!\!\!2\;\!\!\!\protect\circlenum{14}\;\!\!\!6\;\!\!\!\protect\circlenum{15}\;\!\!\!9\;\!\!\!\protect\circlenum{10}\;\!\!\!314\;\!\!\!\protect\circlenum{12}\:\!\!\!)$}
\label{rho fig}
\end{figure}

Given a diagonal rectangulation $R$, we number the rectangles in $R$ according to the position of their intersections with the diagonal, starting with rectangle 1, which contains the top-left corner of $S$ and ending at rectangle $n$, which contains the bottom-right corner of $S$.
Thus, for example, in constructing the rectangulation $\rho(x)$, we first construct the rectangle numbered $x_1$, then the rectangle numbered $x_2$, etc.
Say a permutation $x=x_1\cdots x_n$ is \emph{compatible with} a diagonal rectangulation $R$ if and only if, for every $i\in [n]$, the left and bottom edges of the rectangle numbered $x_i$ are contained in the union of the left and bottom edges of $S$ with the rectangles numbered $x_1,\ldots,x_{i-1}$.
Equivalently, $x$ is compatible with $R$ if, for every $i\in [n]$, the union of the rectangles numbered $x_1,\ldots,x_i$ is left- and bottom-justified. 
The following fact is established in the proof \cite[Proposition~6.2]{rectangle}, which asserts that $\rho$ is surjective.

\begin{prop}\label{rho fibers}
Given a diagonal rectangulation $R$, the fiber $\rho^{-1}(R)$ is the set of permutations in $S_n$ that are compatible with~$R$.
\end{prop}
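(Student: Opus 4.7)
The plan is to prove both inclusions by establishing, step by step, that the $\rho$-construction tracks the rectangulation $R$ in a natural way. I will induct on the step index $i=0,1,\ldots,n$, aiming to show at each step that the partial region $T_i$ produced by $\rho(x)$ coincides with the union of the boundary of $S$ and the rectangles of $R$ numbered $x_1,\ldots,x_i$ (in the diagonal numbering).

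For the inclusion $\rho^{-1}(R)\subseteq\{\text{permutations compatible with }R\}$, a short induction shows that every intermediate region $T_i$ produced by $\rho$ is left- and bottom-justified: the placement rules for the top-left and bottom-right corners glue the new rectangle flush against existing edges of $T_{i-1}$. Moreover, the new rectangle drawn at step $i$ meets the diagonal precisely in the $x_i$-th interval (by the choice of $p$ and $p'$), so it is exactly the rectangle labeled $x_i$ in $\rho(x)$. Consequently, if $\rho(x)=R$, then for every $i$ the union of the rectangles labeled $x_1,\ldots,x_i$ in $R$ equals the left- and bottom-justified region $T_i$, which is exactly the compatibility condition.

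For the reverse inclusion, assume $x$ is compatible with $R$ and induct on $i$, proving at each step that the rectangle drawn by $\rho$ coincides with the rectangle of $R$ labeled $x_i$. Under the inductive hypothesis, $T_{i-1}$ is common to both pictures and is left- and bottom-justified by compatibility. In $R$, the rectangle labeled $x_i$ has its interior crossing the $x_i$-th diagonal interval, while by compatibility its left and bottom edges lie inside $T_{i-1}$. The main combinatorial step is to verify that there is a unique rectangle meeting these two constraints, and that its top-left and bottom-right corners are the very points named by the two cases of the $\rho$-rule (depending on whether the diagonal points $p$ above/left of label $x_i$ and $p'$ below/right of label $x_i$ lie on $T_{i-1}$).

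The principal obstacle is this last verification. It amounts to a local analysis of the boundary of a left- and bottom-justified staircase region $T_{i-1}$ near the diagonal points $p$ and $p'$: one has to see that the condition that the new rectangle's interior contain the $x_i$-th diagonal interval picks out a unique convex upper corner of $T_{i-1}$ to serve as the top-left corner (and symmetrically for the bottom-right corner), and that this corner coincides with the point prescribed by the case-by-case formula for $\rho$. Once this local analysis is in hand, both inclusions follow immediately from the two inductions, and the proposition is proved.
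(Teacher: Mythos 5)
Your outline has the right shape---both inclusions, induction on the construction steps, and the identification of the region $T_{i-1}$ built by $\rho$ with the union of the rectangles of $R$ numbered $x_1,\ldots,x_{i-1}$---and this is consistent with how the result is actually established (note that the paper does not prove this proposition itself; it imports it from the proof of Proposition~6.2 of the Law--Reading paper \cite{rectangle}, where exactly this kind of step-by-step matching is carried out). However, as written your proposal is a plan rather than a proof: you yourself flag the ``principal obstacle'' and then stop. The deferred step is the entire content of the reverse inclusion. One must show that if $x$ is compatible with $R$ and the induction hypothesis holds, then the rectangle of $R$ numbered $x_i$ has its top-left corner exactly at the point prescribed by the two-case rule at $p$ (rightmost point of $T_{i-1}$ directly left of $p$ when $p\notin T_{i-1}$; highest point of $T_{i-1}$ directly above $p$ when $p\in T_{i-1}$), and symmetrically at $p'$. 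This is where the hypothesis that $R$ is a \emph{diagonal} rectangulation must enter: every rectangle of $R$ meets the diagonal, so the wall of $R$ through the relevant diagonal point, together with the staircase shape of $T_{i-1}$, pins down which case occurs and where the corner lies. Without this argument nothing forces the rectangle drawn by $\rho$ to agree with the rectangle of $R$, and the conclusion $\rho(x)=R$ does not follow.

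A secondary point: in the forward inclusion you assert, in a parenthesis, that the rectangle drawn at step $i$ ``meets the diagonal precisely in the $x_i$-th interval,'' which is what guarantees that it receives the number $x_i$ in the diagonal numbering of $\rho(x)$. That also deserves a short verification from the corner rules (the top edge or left edge passes through $p$, and the right edge or bottom edge passes through $p'$, so the interior crosses exactly the interval between them); it is easier than the reverse-direction analysis, but it is part of the same local study of the staircase boundary near $p$ and $p'$ that you have postponed. In short: correct strategy, but the decisive lemma is named, not proved, so the proposal has a genuine gap.
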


The following proposition, which is the concatenation of \cite[Proposition~4.5]{rectangle} and \cite[Theorem~6.3]{rectangle},
shows in particular that the fibers of $\rho$ constitute a congruence of the kind described in Theorem~\ref{H fam combin}.
\begin{prop}\label{tBax moves}
Suppose $x\covered y$ in the weak order, and let $d$ and $a$ be the adjacent entries that are swapped to convert $y$ to $x$, with $a<d$.
Then $\rho(x)=\rho(y)$ if and only if there are entries $b$ and $c$, with $a<b<c<d$, such that $b$ and $c$ are on opposite sides of $da$ in $y$.
\end{prop}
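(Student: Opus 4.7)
My plan is to work directly with the inductive construction of $\rho$ described above. Because $x$ and $y$ agree in positions $1,\ldots,i-1$, where $y_i=d$ and $y_{i+1}=a$, the partial figure $T$ after the first $i-1$ steps is identical in both constructions, and the two constructions differ only in whether rectangle $d$ is drawn before $a$ (in $y$) or after (in $x$). Since the later steps use the same suffix and the same rule, it suffices to check when steps $i$ and $i+1$ together produce the same pair of new rectangles in both orders: then the boundaries of the partial figure after step $i+1$ agree, and the rest of the construction matches automatically.

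The central claim is a geometric dichotomy: $\rho(x)=\rho(y)$ if and only if rectangles $a$ and $d$ share no common edge in the final tiling. If they are separated by a chain of intermediate rectangles, the corner-placement rule for each of $a$ and $d$ depends only on portions of $T$ that are unaffected by the insertion of the other, so both drawing orders produce identical rectangles. If instead they share an edge, inserting the first of $a$ and $d$ extends $T$ along one side of that edge and forces the second rectangle to abut $T$ differently depending on which went first; a direct case check then shows the two resulting tilings are non-equivalent.

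It remains to translate this non-adjacency criterion into the stated pattern condition on $y$. Only rectangles numbered strictly between $a$ and $d$ can wedge between them along the diagonal, so I would analyze how each such rectangle is placed depending on whether it appears before $d$ or after $a$ in $y$. The upshot should be that the chain separating $a$ from $d$ in the final tiling is formed precisely when some value $b$ with $a<b<d$ lies on one side of $da$ in $y$ while some value $c$ with $a<c<d$ lies on the other: the first contributes a rectangle jutting into the region between $a$ and $d$ from one direction, and the second fills the complementary direction, severing what would otherwise be the common edge of $a$ and $d$. The main obstacle, and the most delicate part, is this final bookkeeping: carefully tracking which diagonal points lie on the boundary of $T$ when $a$ and $d$ are drawn, and confirming that values outside the open interval $(a,d)$ never affect the mutual adjacency of $a$ and $d$, so that only the pattern involving $b,c\in(a,d)$ enters the criterion.
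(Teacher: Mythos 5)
Your outline points in a reasonable direction, but be aware that the paper itself does not prove this proposition: it is imported from \cite{rectangle} (Proposition~4.5 concatenated with Theorem~6.3), so the closest in-paper model for your strategy is the proof of Proposition~\ref{gamma fibers}, which your plan resembles. The individual claims you make are, as far as they go, correct: the reduction to steps $i$ and $i+1$ is the right start, the adjacency dichotomy is the right geometric criterion for $\rho$ (adjacency of the two rectangles, as opposed to the stronger common-wall condition used for $\gamma$), and your restated target condition (some value of the open interval $(a,d)$ on each side of $da$) is equivalent to the stated one, by taking $b$ and $c$ to be the smaller and larger of the two witnesses. The problem is that the two load-bearing steps are asserted rather than proved, and one of them is harder than your wording suggests.

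First, the inference from ``the two orders produce different rectangles at steps $i,i+1$'' to $\rho(x)\neq\rho(y)$ is not a ``direct case check'': once the local pair differs, later rectangles can differ as well, and equality in $\gRec_n$ means combinatorial equivalence of the completed tilings, not identity of drawings, so comparing ``the two resulting tilings'' locally does not settle it. A clean way to close this is Proposition~\ref{rho fibers}: if rectangles $a$ and $d$ abut in $\rho(y)$, then, since $d$ is drawn first there, rectangle $a$ of $\rho(y)$ lies above or to the right of rectangle $d$ across the shared edge, so the union of rectangle $a$ with the rectangles of the common prefix is not left- and bottom-justified; hence $x$ is not compatible with $\rho(y)$ and $\rho(x)\neq\rho(y)$. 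Your sketch never invokes compatibility and offers no substitute; relatedly, your dichotomy should be phrased as adjacency in $\rho(x)$ (or in $\rho(y)$), since when $\rho(x)\neq\rho(y)$ the phrase ``the final tiling'' is ambiguous. Second, the translation of adjacency into the pattern condition is the heart of the proposition, and your proposal only states what ``the upshot should be.'' Both implications require an analysis of the staircase $T$ formed by the first $i-1$ rectangles: one must show that when no value of $(a,d)$, or only values on one side of $da$, occurs among the earlier entries, $T$ meets none of the region between diagonal points $a$ and $d-1$, so whichever of the two rectangles is drawn first reaches across that region and the second lands against it, making the outcome order-dependent; and conversely that a value $b\in(a,d)$ before $da$ together with a value $c\in(a,d)$ after $da$ forces every point of rectangle $d$ strictly below and to the right of every point of rectangle $a$, severing adjacency (this is the analogue of the Figure~\ref{fiber fig} analysis in the proof of Proposition~\ref{gamma fibers}). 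You correctly flag this as the delicate part, but flagging it is not doing it, and without it the proposed proof has no content beyond the (correct) statement of an intermediate criterion.
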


In some of the literature on floorplanning for integrated circuits, generic rectangulations are referred to as \emph{mosaic floorplans}, but in that literature, the term mosaic floorplan always implies a coarser equivalence relation than the combinatorial equivalence used to define rectangulations as equivalence classes.
Specifically, two generic rectangulations are equivalent as mosaic floorplans if and only if they are related by a sequence of what we call \emph{wall slides}.
A \emph{wall} in a rectangulation~$R$ is a line segment in the underlying rectangle $S$, not contained in an edge of $S$, that is maximal with respect to the property of not intersecting the interior of any rectangle of $R$.
A wall slide along a wall $W$ is the operation taking two walls of~$R$ that end in $W$, from opposite sides, and sliding them past each other, without changing any of the other incidences in $R$.
Wall slides come in two orientations, as illustrated in Figure~\ref{slide fig}.
\begin{figure}
\begin{tabular}{ccc}
\includegraphics{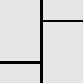}&\raisebox{9.5 pt}{$\longleftrightarrow$}&\includegraphics{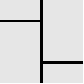}\\[10 pt]
\includegraphics{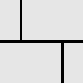}&\raisebox{9.5 pt}{$\longleftrightarrow$}&\includegraphics{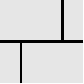}
\end{tabular}
\caption{Wall slides}
\label{slide fig}
\end{figure}
The following is a very special case of \cite[Theorem~4]{ABP}.
\begin{prop}\label{unique diagonal mosaic}
Given a generic rectangulation $R$, there exists a unique diagonal rectangulation $R'$ such that $R$ and $R'$ are equivalent as mosaic floorplans. 
\end{prop}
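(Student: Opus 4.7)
The plan is to establish existence by a direct wall-sliding construction and then deduce uniqueness from a cardinality argument.

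For existence, given an arbitrary generic rectangulation $R$, I would produce a diagonal representative via iterated wall slides. Call a rectangle $U$ of $R$ \emph{off-diagonal} if its interior does not meet the diagonal of $S$; such a $U$ lies strictly above or strictly below the diagonal. I would argue that if any off-diagonal rectangle exists, then choosing one $U$ of minimal perpendicular distance from the diagonal, a wall slide at the wall separating $U$ from one of its neighbors on the diagonal side is available and can be chosen so as to strictly reduce some nonnegative invariant — for instance, the total area lying on the ``wrong'' side of the diagonal, summed over off-diagonal rectangles. Iteration then terminates at a diagonal rectangulation. Some care is needed to verify that the required slide always exists: this uses the fact that $R$ is generic (so T-junctions near $U$ are well-behaved) together with the fact that at least one of $U$'s neighbors on the diagonal side actually crosses the diagonal, by the choice of $U$.

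For uniqueness, rather than arguing geometrically (which would require a delicate case analysis of how each wall slide alters the set of rectangles meeting the diagonal), I would deduce it from a cardinality comparison. By the results cited in the introduction, mosaic floorplans on $n$ rectangles are in bijection with Baxter permutations in $S_n$, via \cite{ABP2,YCCG}. Separately, the map $\rho$ of~\cite{rectangle} is surjective onto $\dRec_n$ (Proposition~\ref{rho fibers}), and Proposition~\ref{tBax moves} identifies its fibers as the classes of a lattice congruence on the weak order, whose minimal class representatives are the twisted Baxter permutations, equinumerous with Baxter permutations. Thus $|\dRec_n|$ equals the number of mosaic floorplans on $n$ rectangles, and the natural map from $\dRec_n$ to mosaic floorplan classes, shown surjective by the existence argument above, is automatically a bijection.

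The main obstacle is the local geometry underlying existence: establishing that a wall slide pushing an off-diagonal rectangle toward the diagonal is always available requires a careful enumeration of the T-junction patterns on the diagonal side of $U$, and a verification that the invariant chosen is strictly decreased. This is where most of the real work lives; once completed, the counting half of the argument immediately supplies uniqueness without any further geometric analysis.
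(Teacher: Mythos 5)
Your uniqueness-by-counting half is sound and is a genuinely different route from the paper's: the paper does not prove this proposition from scratch at all, but quotes it as a special case of \cite[Theorem~4]{ABP}, the only work being to reconcile the two definitions of diagonal rectangulation via \cite[Proposition~5.2]{rectangle}. Your count — mosaic classes with $n$ blocks are equinumerous with Baxter permutations \cite{ABP2,YCCG}, while $|\dRec_n|$ equals the number of twisted Baxter permutations by the fiber analysis of $\rho$ (Propositions~\ref{rho fibers} and~\ref{tBax moves}, from \cite{rectangle}), which in turn equals the number of Baxter permutations \cite{West pers,rectangle,Giraudo} — is not circular, since none of these inputs uses the present proposition, and a surjection between finite sets of equal cardinality is indeed a bijection. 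What it buys is uniqueness with no geometric case analysis; what it costs is reliance on three nontrivial enumerative results where the paper uses a single citation.

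The genuine gap is in the existence half, which you yourself flag as containing ``most of the real work,'' and your sketch has two concrete defects. First, ``off-diagonal rectangle,'' perpendicular distance to the diagonal, and ``total area on the wrong side'' are properties of a geometric representative, not of the combinatorial equivalence class: a wall slide is a combinatorial move, after which these quantities are not canonically determined, and they can also be changed arbitrarily by a combinatorially trivial deformation with no slide at all, so your proposed termination measure is not well defined and cannot drive an induction. Second, the assertion that a slide ``pushing $U$ toward the diagonal'' is always available at the wall separating $U$ from its diagonal-side neighbors is exactly the content that needs proof: a wall slide requires two walls meeting a common wall from opposite sides that can pass one another, and near $U$ there may be no such pair, while slides available elsewhere along that wall need not improve $U$'s position. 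To make existence rigorous you would either need the wall-based characterization of diagonal rectangulations from \cite[Section~5]{rectangle} (every wall contains one of $n-1$ fixed diagonal points), reducing the claim to a normalization of wall configurations, or simply invoke \cite[Theorem~4]{ABP} as the paper does. As written, the existence argument is a plan rather than a proof, so the proposal does not yet establish the proposition.
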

To see Proposition~\ref{unique diagonal mosaic} as a special case of \cite[Theorem~4]{ABP}, we need the definition of a diagonal rectangulation given in \cite[Section~5]{rectangle}:
Let $X$ be a set of $n-1$ distinct points on the diagonal of $S$, none of which is the top-left corner or bottom-right corner of $S$.  
Then a diagonal rectangulation of $(S,X)$ is a generic rectangulation such that every wall contains a point of $X$ and such that every point of $X$ lies on a wall.  
By \cite[Proposition~5.2]{rectangle}, this definition is equivalent to the earlier definition.

Suppose $R$ is a generic rectangulation and let $R'$ be the diagonal rectangulation that is equivalent to $R$ as a mosaic floorplan.
As before, number the rectangles in $R'$ according to the position of their intersections with the diagonal, $1$ to $n$ from top-left to bottom-right.
Letting this numbering propagate along wall slides in the obvious way, we obtain a numbering of the rectangles of $R$.
For each vertical wall $W$ of $R$, we produce a permutation $\sigma_W$ of a subset of $[n]$ as follows:
Moving from the bottom endpoint of $W$ to the top endpoint of $W$, when we come to a wall $W'$ that is incident to $W$ on the left, we record the number of the rectangle that has its right edge in $W$ and its \emph{bottom} edge in $W'$.
When we come to a wall $W'$ that is incident to $W$ on the right, we record the number of the rectangle that has its left edge in $W$ and its \emph{top} edge in $W'$.
The resulting partial permutation $\sigma_W$ is called the \emph{wall shuffle} associated to $W$, because it is obtained by shuffling two sequences: the decreasing sequence of numbers of rectangles whose right edge is contained in~$W$ (excluding the bottom such rectangle) from bottom to top and the decreasing sequence of numbers of rectangles whose left edge is contained in $W$ (excluding the top such rectangle) from bottom to top.

For each horizontal wall $W$, we construct the wall shuffle associated to $W$ in a similar manner.
Moving from the left endpoint of $W$ to the right endpoint of $W$, when we come to a wall $W'$ that is incident to $W$ on the top, we record the number of the rectangle that has its bottom edge in $W$ and its \emph{right} edge in $W'$.
When we come to a wall $W'$ that is incident to $W$ on the bottom, we record the number of the rectangle that has its top edge in $W$ and its \emph{left} edge in $W'$.
The partial permutation $\sigma_W$, in this case, is obtained by shuffling two increasing sequences: the sequence of numbers of rectangles whose bottom edge is contained in $W$ (excluding the rightmost such rectangle) from left to right and the sequence of numbers of rectangles whose top edge is contained in $W$ (excluding the leftmost such rectangle) from left to right.

\begin{example}\label{wall shuf ex}
Figure~\ref{rec numbered fig} shows a generic rectangulation $R$ whose associated diagonal rectangulation $R'$ is the rectangulation from Figure~\ref{rho fig}.
The numbering of rectangles is inherited from $R'$.
\begin{figure}
\centerline{\includegraphics{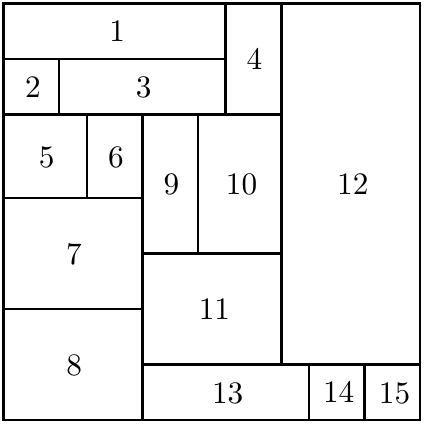}}
\caption{A generic rectangulation}
\label{rec numbered fig}
\end{figure}
Tables~\ref{sh tab 1} and~\ref{sh tab 2} show the wall shuffles associated to~$R$.
\end{example}

\setcounter{figure}{1}
\begin{figure}
\begin{tabular}{ccc}
\toprule
Rectangles left of wall &Rectangles right of wall &Wall shuffle\\
\midrule
2&3&\textit{empty}\\
5&6&\textit{empty}\\
8, 7, 6&13, 11, 9&\circlenum{13}\;\!\!\!7\!\!\!\circlenum{11}\;\!\!\!6\\
9&10&\textit{empty}\\
3, 1&4&1\\
11, 10, 4&12&\circlenum{10}\;\!\!\!4\\
13&14&\textit{empty}\\
14&15&\textit{empty}\\
\bottomrule
\end{tabular}
\captionsetup{name=Table}
\caption{Wall shuffles in vertical walls of the rectangulation of Figure~\ref{rec numbered fig}}
\label{sh tab 1}
\bigskip
\bigskip
\begin{tabular}{ccc}
\toprule
Rectangles above wall &Rectangles below wall &Wall shuffle\\
\midrule
1&2,3&3\\
2, 3, 4&5, 6, 9, 10&269\!\!\!\circlenum{10}\;\!\!\!3\\
5, 6&7&5\\
9. 10&11&9\\
7&8&\textit{empty}\\
11, 12&13, 14, 15&\circlenum{11}\!\!\!\!\!\circlenum{14}\!\!\!\!\!\circlenum{15}\\
\bottomrule
\end{tabular}
\captionsetup{name=Table,margin=0pt}
\caption{Wall shuffles in horizontal walls of the rectangulation of Figure~\ref{rec numbered fig}}
\label{sh tab 2}
\end{figure}

Specifying a generic representation $R$ is equivalent to specifying the associated diagonal rectangulation $R'$ along with the wall shuffles for each wall.
For some walls, there may be only one shuffle possible, and this unique shuffle may be empty.
The shuffles may be chosen arbitrarily (among shuffles of the appropriate rectangle numbers) and independently for each wall, and each sequence of choices of $R'$ and the wall shuffles yields a different generic rectangulation.

When a wall slide is performed along a wall $W$, the move alters $\sigma_W$ by swapping two adjacent entries which number rectangles on opposite sides of $W$.
Since a wall slide only changes the combinatorics locally, performing a wall slide along $W$ does not alter the wall shuffle for any other wall.

We now define the map $\gamma:S_n\to\gRec_n$.
Let $x=x_1x_2\cdots x_n\in S_n$ and construct $R'=\rho(x)$.  
Let $W$ be a vertical wall in $R'$ and consider the rectangles in $R'$ having their right edges contained in $W$.
By construction, the numbers of these rectangles form a decreasing subsequence of $x_1x_2\cdots x_n$.
Similarly, the numbers of the rectangles in $R'$ having their left edges contained in $W$ are a decreasing subsequence of $x_1x_2\cdots x_n$.
Thus we can specify a wall shuffle $\sigma_W$ by taking the subsequence of $x_1x_2\cdots x_n$ consisting of the appropriate rectangle numbers.
For a horizontal wall $W$, the numbers of the rectangles having their top edges contained in $W$ form an increasing subsequence of $x_1x_2\cdots x_n$ and the numbers of the rectangles having their top edges contained in $W$ form an increasing subsequence of $x_1x_2\cdots x_n$, so, in this case as well, we can specify a wall shuffle for $W$ by taking an appropriate subsequence of $x_1x_2\cdots x_n$.
The diagonal rectangulation $R'$ together with all of these wall shuffles define the generic rectangulation $\gamma(x)$.

\begin{example}\label{gamma ex}
This is a continuation of Examples~\ref{rho ex} and~\ref{wall shuf ex}.
Figure~\ref{rho fig} shows the construction of $\rho(x)$ for $x=8\;\!\!\!\circlenum{13}\;\!\!\!75\;\!\!\!\circlenum{11}\;\!\!\!2\;\!\!\!\circlenum{14}\;\!\!\!6\;\!\!\!\circlenum{15}\;\!\!\!9\;\!\!\!\circlenum{10}\;\!\!\!314\;\!\!\!\circlenum{12}\:\!\!\!$.
To construct $\gamma(x)$, we look at each wall of $\rho(x)$.
For example, $\rho(x)$ has a horizontal wall $W$ with rectangles 2, 3, and 4 above $W$ and rectangles 5, 6, 9, and 10 below $W$.
The restriction of $x$ to the set $\set{2,3,6,9,10}$ is $269\;\!\!\!\circlenum{10}\;\!\!\!3$.
Thus $\gamma(x)$ is a rectangulation that is mosaic equivalent to $\rho(x)$ and that has a wall shuffle $269\;\!\!\!\circlenum{10}\;\!\!\!3$.
Considering similarly the other five horizontal walls of $\rho(x)$ and the eight vertical walls of $\rho(x)$, we see that $\gamma(x)$ is the rectangulation shown in Figure~\ref{rec numbered fig}.
(Cf.\ Tables~\ref{sh tab 1} and~\ref{sh tab 2}.)
\end{example}

\begin{example}\label{gamma S4 ex}
Figure~\ref{gamma S4 fig} shows the map $\gamma$ applied to every permutation in $S_4$.
The permutations in $S_4$ are shown in the weak order, and the 24 rectangulations in $\gRec_4$ are shown in the corresponding order.
As a byproduct of the results of Section~\ref{main sec}, the map $\gamma:S_n\to\gRec_n$ induces a lattice structure on $\gRec_n$ such that $\gamma$ is a surjective lattice homomorphism.
\end{example}
\begin{figure}
\scalebox{1}{\includegraphics{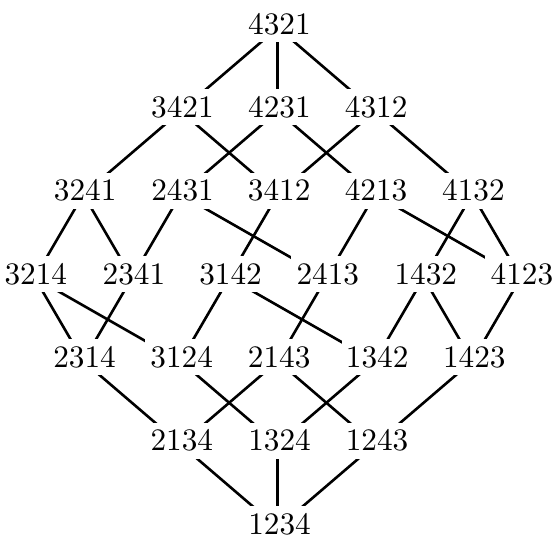}}\quad\scalebox{1}{\includegraphics{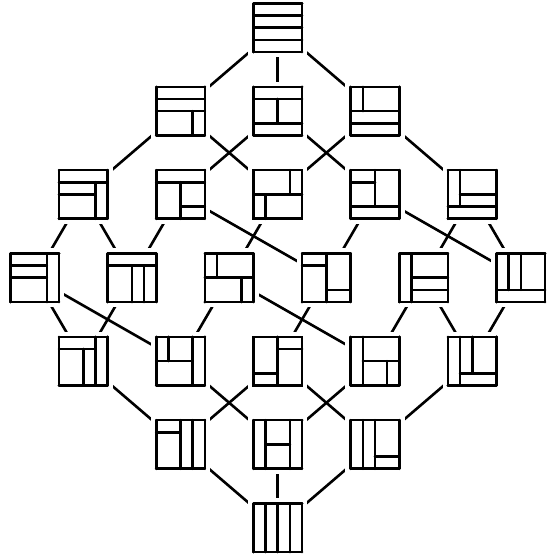}}
\caption{$\gamma:S_4\to\gRec_4$}
\label{gamma S4 fig}
\end{figure}

\section{Main theorem}\label{main sec}
In this section, we prove our main theorem.

\begin{theorem}\label{main}
The restriction of $\gamma$ is a bijection from the set of $2$-clumped permutations in $S_n$ to the set of generic rectangulations with $n$ rectangles.
\end{theorem}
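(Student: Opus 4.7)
The plan is to match the fibers of $\gamma$ with the congruence classes of $\Gamma = \H(\set{35124, 24513})_n$, at which point Proposition~\ref{Theta g props}(\ref{Theta g bottoms}) exhibits the $2$-clumped permutations as a system of representatives and the claimed bijection follows. The heart of the argument is a cover-level analysis that reconciles the geometric definition of $\gamma$ with the combinatorial description of $\Gamma$.

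The pivotal step is a bidirectional cover characterization: for any cover $x \covered y$ in the weak order, with swapped adjacent entries $y_i = e$ and $y_{i+1} = a$ where $a < e$, I would show that $\gamma(x) = \gamma(y)$ if and only if the condition of Proposition~\ref{Theta g props}(\ref{Theta g moves}) holds. The proof splits according to whether $\rho(x) = \rho(y)$. By Proposition~\ref{tBax moves}, these coincide precisely when some pair $b < c$ with $a < b < c < e$ lies on opposite sides of $ea$ in $y$; failure of this weaker condition forces both $\gamma(x) \ne \gamma(y)$ and the failure of the stronger three-element alternating condition. When $\rho(x) = \rho(y) = R'$, the swap of $e$ and $a$ induces at most a single wall slide in $R'$, and the generic rectangulation changes if and only if some wall shuffle $\sigma_W$ contains $e$ and $a$ as adjacent entries in the subsequence induced by $y$. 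A geometric analysis of the walls incident to rectangles $a$ and $e$ in $R'$ shows that such an adjacency occurs exactly when the alternating triple $b < c < d$ of Proposition~\ref{Theta g props}(\ref{Theta g moves}) fails to exist.

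Combining the cover characterization with Theorem~\ref{H fam combin} and the general fact (noted after Theorem~\ref{H fam combin}) that classes of a congruence on a finite lattice are intervals, the ``if'' direction yields that $\gamma$ is constant on every $\Gamma$-class. Thus $\gamma$ factors as a well-defined surjection $\overline{\gamma}: S_n / \Gamma \to \gRec_n$, once the surjectivity of $\gamma$ is separately established. For surjectivity, given $R \in \gRec_n$ with diagonal rectangulation $R'$ supplied by Proposition~\ref{unique diagonal mosaic}, I would construct a permutation $x$ compatible with $R'$ in the sense of Proposition~\ref{rho fibers} whose induced wall shuffles agree with those of $R$, exploiting the independence of wall-shuffle choices observed at the end of Section~3. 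For injectivity of $\overline{\gamma}$---equivalently, that each $\gamma$-fiber contains a unique $2$-clumped permutation---I would use the ``only if'' direction of the cover characterization: in the quotient lattice $S_n / \Gamma$, each cover lifts to an $S_n$-cover between distinct $\Gamma$-classes at which $\gamma$ must strictly vary, and a chain argument in the quotient lattice rules out distinct $\Gamma$-classes from mapping to the same rectangulation.

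The main obstacle is the geometric half of the cover characterization: extracting a concrete wall-shuffle change from the failure of the alternating pattern condition. This reduces to a careful case analysis of how the walls bordering rectangles $a$ and $e$ in $R'$ are populated by rectangles with labels strictly between $a$ and $e$, and translating the ``all on one side'' or ``two contiguous same-side blocks'' configurations of such labels into the required geometric adjacency of rectangles $a$ and $e$ across a common wall.
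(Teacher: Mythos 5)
Your overall architecture is the same as the paper's: establish surjectivity of $\gamma$, prove the cover-level characterization of when $\gamma(x)=\gamma(y)$ for $x\covered y$ (this is exactly Proposition~\ref{gamma fibers}, matching the condition of Proposition~\ref{Theta g props}.\ref{Theta g moves}), and then invoke Proposition~\ref{Theta g props}.\ref{Theta g bottoms} to get $2$-clumped representatives. The genuine gap is in your injectivity step. The ``only if'' direction of the cover characterization gives you that $\gamma$ separates any two $\Gamma$-classes that are \emph{adjacent} (related by a cover lifted from the quotient), but that is strictly weaker than injectivity of $\overline{\gamma}$: a map can change value across every covering pair of classes in $S_n/\Gamma$ and still take equal values on two classes farther apart, and the two classes in question need not even be comparable in the quotient. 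So your ``chain argument in the quotient lattice'' cannot rule out distinct $\Gamma$-classes mapping to the same rectangulation. What the cover characterization actually yields is only that each fiber of $\gamma$ is a \emph{union} of $\Gamma$-classes. The missing idea, which is the content of the paper's Proposition~\ref{gamma bottoms}, is a connectivity argument \emph{inside} each fiber: given distinct $x,y$ with $\gamma(x)=\gamma(y)$, let $i$ be the first position where they differ and let $y_k=x_i$; since $\gamma(x)=\gamma(y)$, the entry $y_k$ participates in no wall shuffle together with any of $y_i,\dots,y_{k-1}$, so moving $y_k$ leftward one position at a time produces a sequence of weak-order covers on which $\gamma$ is constant. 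By the cover characterization and Proposition~\ref{Theta g props}.\ref{Theta g moves}, each such cover is a $\Gamma$-congruence, and induction on $n-i$ gives $x\equiv y$. Without some argument of this kind, each fiber could a priori consist of several $\Gamma$-classes and the restriction of $\gamma$ to $2$-clumped permutations would not be injective.

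Secondarily, the two steps you flag as ``I would show'' are precisely where the paper does most of its work: the geometric half of the cover characterization is a corner-by-corner analysis of the staircase boundary of the already-constructed region $T$ (locating the bottom-left corners of $U_a$ and $U_e$ and deciding whether they share a wall), and surjectivity requires the greedy inductive construction in Proposition~\ref{gamma surj} showing that some rectangle $U_j$ (or, after a case analysis, $U_{j+1}$, $U_k$, \dots) can always be chosen next consistently with both Proposition~\ref{rho fibers} and all wall shuffles; the remark at the end of Section~3 that wall shuffles can be chosen independently describes the data but does not by itself produce the required permutation. As an outline your reductions there are correct and match the paper's strategy, but they are not yet proofs.
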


The proof of Theorem~\ref{main} is accomplished by proving three propositions.

\begin{prop}\label{gamma surj}
The map $\gamma:S_n\to \gRec_n$ is surjective.
\end{prop}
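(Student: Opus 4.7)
The plan is to construct, given $R \in \gRec_n$, a permutation $x \in S_n$ with $\gamma(x) = R$. Using Proposition~\ref{unique diagonal mosaic}, let $R'$ be the unique diagonal rectangulation mosaic equivalent to $R$, and transfer the diagonal numbering of the rectangles of $R'$ to $R$ via wall slides. Any preimage $x$ of $R$ under $\gamma$ must satisfy $\rho(x) = R'$; by Proposition~\ref{rho fibers}, $x$ must then be compatible with $R'$. Moreover, by the very definition of $\gamma$, for each wall $W$ of $R'$, the subsequence of $x$ supported on the entries occurring in $\sigma_W$ must equal the wall shuffle $\sigma_W$ of $R$. Conversely, if $x$ meets both requirements, then $\gamma(x) = R$.

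Both families of requirements are of the form ``rectangle $a$ must be listed before rectangle $b$'', so together they define a binary relation $\prec$ on $[n]$. Explicitly, I would set $a \prec b$ whenever either (a) rectangle $a$ shares an edge with rectangle $b$ in $R'$ with $a$ immediately to the left of or immediately below $b$, or (b) $a$ occurs before $b$ in some wall shuffle $\sigma_W$ of $R$. Any linear extension of the transitive closure of $\prec$ yields a permutation $x$ that is compatible with $R'$ and restricts correctly on each wall shuffle's support, and hence satisfies $\gamma(x) = R$.

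The main obstacle is showing that $\prec$ is acyclic. The relation generated by (a) alone is acyclic because it is contained in the strict partial order on rectangles of $R'$ generated by the ``left of'' and ``below'' relations, which is acyclic by planarity; this is essentially the content of the surjectivity of $\rho$ established in \cite{rectangle}. Each individual wall shuffle $\sigma_W$ is a linear order on its support, hence also acyclic. The nontrivial task is to show that the orders coming from different wall shuffles, or from (a) combined with a wall shuffle, never conflict. I would handle this by a direct geometric analysis: whenever a pair $\{a, b\}$ is ordered by two different constraints, the geometric positions of $a$ and $b$ in $R'$, together with the bottom-to-top (for vertical walls) or left-to-right (for horizontal walls) rule by which wall shuffles are read off, force both constraints to impose the same order. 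Once $\prec$ is acyclic, any linear extension produces the required permutation $x$, completing the proof.
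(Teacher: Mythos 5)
Your reduction of surjectivity to an order-theoretic statement is sound: since a generic rectangulation is determined by its associated diagonal rectangulation $R'$ together with its wall shuffles, and since a linear extension of the ``immediately left of / immediately below'' adjacency relation is exactly a permutation compatible with $R'$ (Proposition~\ref{rho fibers}), it does suffice to produce a linear order on $[n]$ extending both the adjacency relation of $R'$ and every wall shuffle. The problem is the step you flag as ``the main obstacle'' and then do not actually carry out. First, your proposed method is logically insufficient even as a plan: showing that any pair $\set{a,b}$ ordered by two different constraints is ordered the same way by both does not give acyclicity of the union $\prec$. A directed cycle can pass through three or more rectangles, with each consecutive pair ordered by only one constraint (say $a\prec b$ from one wall shuffle, $b\prec c$ from adjacency in $R'$, $c\prec a$ from a different wall shuffle); no pair is doubly constrained, yet the transitive closure is not a partial order and has no linear extension. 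Ruling out such global deadlocks is precisely where the content of the statement lies.

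Second, that global argument is nontrivial and is what occupies essentially all of the paper's proof: the paper proceeds greedily, and when the natural candidate rectangle $U_j$ (the first one along the staircase boundary of the already-placed region whose bottom and left edges are covered) is blocked by the shuffle on the wall containing its right or bottom edge, a detailed geometric case analysis shows that the next candidate $U_{j+1}$ is then available, using both the diagonal property of $R'$ (to force certain rectangles to be extremal on their walls) and genericity (no four rectangles share a corner). In your language, this is exactly the verification that the constraint system has no deadlock, i.e.\ that $\prec$ is acyclic, and none of that geometry appears in your proposal beyond the announcement that it ``would'' be done. As written, the proposal restates the proposition in the form ``the adjacency order and the wall shuffles admit a common linear extension'' and leaves that claim unproved, so there is a genuine gap.
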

\begin{proof}
Let $R'$ be any diagonal rectangulation and choose an arbitrary wall shuffle for each wall of $R'$.
We need to show that there exists $x=x_1x_2\cdots x_n\in S_n$ such that $\rho(x)=R'$ and such that each chosen wall shuffle is a subsequence of $x_1x_2\cdots x_n$.
That is, we need to show that the rectangles of $R'$ can be ordered consistent with the requirements of Proposition~\ref{rho fibers} and with the wall shuffles.

Suppose, for $1\le i\le n$, that we have chosen $i-1$ rectangles in an order consistent with the requirements of Proposition~\ref{rho fibers} and with the wall shuffles.
We will show that we can choose a rectangle in step $i$ that also satisfies the requirements.
Since we have chosen consistent with Proposition~\ref{rho fibers}, the union $T$ of the $i-1$ rectangles chosen with the left and bottom edges of $S$, is a left- and bottom-justified set.
To satisfy the requirement of Proposition~\ref{rho fibers} in step $i$, we must chose a rectangle whose bottom and left edges are contained in $T$.
To show that we can choose such a rectangle consistent with the wall shuffles, we extend an argument from the proof of \cite[Proposition~6.2]{rectangle}.

The top-right boundary of $T$ is a polygonal path from the top-left corner of $S$ to the bottom-right corner of $S$, always moving directly right or directly down.
Each point where the path turns from moving down to moving right is the bottom-left corner of a rectangle of $R'$ that is not contained in $T$.
We index these rectangles $U_1,\ldots,U_m$ from top-left to bottom-right.
The left edge of $U_1$ is necessarily contained in $T$, or else we were wrong to index it as $U_1$.
Thus if $U_1$ fails to have both its bottom and left edges in $T$, then its bottom edge is not contained in $T$.
This implies that the left edge of $U_2$ is contained in $T$.
We continue until we find the first~$j$ such that the bottom edge of $U_j$ is contained in $T$.
Since the bottom edge of $U_m$ is in $T$, such a $j$ exists.
Necessarily, the left edge of $U_j$ is also contained in $T$.

We now consider the walls containing the edges of $U_j$.
First, let $W_l$ be the wall containing the left edge of $U_j$.
(If $j=1$ and the left edge of $U_j$ is in the left edge of $S$, then there is no wall shuffle associated to the left edge of $U_j$.)
Because the bottom edge of $U_{j-1}$ is not contained in $T$, the top endpoint of $W_l$ is contained in the bottom edge of $U_{j-1}$, as illustrated in Figure~\ref{surj fig}.
\begin{figure}
\scalebox{1}{\includegraphics{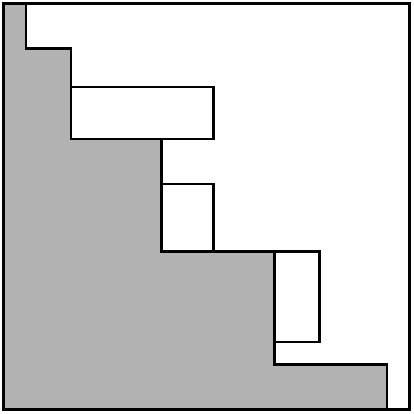}}
\begin{picture}(0,0)(123,0)
\put(49.5,54){$U_j$}
\put(32,84.5){$U_{j-1}$}
\put(82.5,30.5){$V$}
\put(25,25){$T$}
\end{picture}
\caption{A figure illustrating the proof of Proposition~\ref{gamma surj}}
\label{surj fig}
\end{figure}
(If $j=1$ and the left edge of $U_j$ is not in the left edge of $S$, then the top vertex of $W_l$ is in the top edge of $S$.)
We conclude that all rectangles adjacent to and left of $W_l$ are contained in $T$.
Thus we can pick $U_j$ in step $i$ consistent with the wall shuffle in $W_l$.

Second, let $W_t$ be the wall containing the top edge of $U_j$.
If $W_t$ is also the wall containing the bottom edge of $U_{j-1}$, then since the bottom edge of $U_{j-1}$ is not contained in $T$, the top-left corner $p$ of $U_j$ is the top-right corner of another rectangle of $R'$.
Since that other rectangle must intersect the diagonal, every point on $W_t$ from~$p$ rightwards is above the diagonal.
Thus $U_{j-1}$ is the rightmost of the rectangles adjacent to and above $W_t$, because otherwise the bottom-right corner of $U_{j-1}$ is the bottom-left corner of a rectangle of $R'$ that doesn't intersect the diagonal.
Since the left edge of $U_{j-1}$ is contained in $T$, all other rectangles adjacent to and above $W_t$ are constructed in steps $1$ through $i-1$.
Thus we can pick $U_j$ in step $i$ consistent with the wall shuffle in $W_t$.
If $W_t$ is not the wall containing the bottom edge of $U_{j-1}$, then the left endpoint of $W_t$ is also the top-left corner of $U_j$.
(This is the case that is illustrated in Figure~\ref{surj fig}.)
In this case, $U_j$ is leftmost among rectangles adjacent to and below $W_t$, so $U_j$ does not figure in the wall shuffle in~$W_t$.

We have shown that picking $U_j$ in step $i$ is allowed by Proposition~\ref{rho fibers} and by the wall shuffles in the walls $W_l$ and $W_t$.
Let $W_r$ be the wall containing the right edge of $U_j$ and let $W_b$ be the wall containing the bottom edge of $U_j$, if these exist.
If $j=m$, then there is no wall $W_r$ and either there is no wall $W_b$ or $U_j$ is the rightmost rectangle adjacent to and above $W_b$, so that $U_j$ does not figure in the wall shuffle in $W_b$.
Thus if $j=m$, the rectangle $U_j$ can be picked in step $i$.
If, on the other hand, $j<m$, then $U_j$ can be picked in step $i$ if and only picking it is allowed by the wall shuffle in $W_r$ and by the wall shuffle in $W_b$.

Let $W'_l$ be the wall containing the left edge of $U_{j+1}$ and let $W'_t$ be the wall containing the top edge of $U_{j+1}$.
We will prove the following claim:
If picking $U_j$ in step $i$ is disallowed by the wall shuffle in $W_r$, or if it is disallowed by the wall shuffle in $W_b$, then the left edge of $U_{j+1}$ is contained in $T$, and picking $U_{j+1}$ in step $i$ is allowed by the wall shuffle in $W'_l$ and by the wall shuffle in $W'_t$.

First, suppose that picking $U_j$ in step $i$ is disallowed by the wall shuffle in $W_r$.
If the bottom endpoint of $W_r$ is also the bottom-right corner of $U_j$ (as shown in Figure~\ref{surj fig}), then $U_j$ is the lowest of the rectangles adjacent to and left of $W_r$.
This would contradict the supposition that picking $U_j$ next is disallowed by the wall shuffle in $W_r$, so we conclude that the wall $W_r$ continues below $U_j$.
Since the bottom edge of $U_j$ is contained in $T$, it follows that the bottom right corner of $U_j$ is the next convex corner of $T$.
In particular, the wall shuffle in $W_r$ requires us to choose $U_{j+1}$ before $U_j$.
Since the wall $W'_l$ coincides with $W_r$, we know that the wall shuffle in $W'_l$ does not prevent us from choosing $U_{j+1}$ next.
Also, we see that the left edge of $U_{j+1}$ is contained in $T$:
Otherwise the right edge of $U_j$ intersects the left edge of $U_{j+1}$, making $U_{j+1}$ the topmost of the rectangles adjacent to and right of $W_r$ (because $R'$ is a diagonal rectangulation).
This contradicts the supposition that picking $U_j$ next is disallowed by the wall shuffle in $W_r$.
Furthermore, we see that the top-left corner of $U_{j+1}$ is strictly below the convex corner of $T$ separating $U_j$ from $U_{j+1}$:
Otherwise, that convex corner is the corner of four rectangles of $R'$ (including $U_j$ and $U_{j+1}$).
Thus $U_{j+1}$ is the leftmost rectangle adjacent to and below $W'_t$, so it does not figure in the wall shuffle in $W'_t$.

Next, suppose that picking $U_j$ in step $i$ is disallowed by the wall shuffle in $W_b$.
Let $V$ be leftmost among rectangles adjacent to and below $W_b$ that are not contained in $T$.
The rectangle $V$ is also shown in Figure~\ref{surj fig}.
Since picking $U_j$ in step $i$ is disallowed by the wall shuffle in $W_b$, the rectangle $V$ exists and comes before $U_{j}$ in the wall shuffle for $W_b$.
The top endpoint of the wall $W'_l$ is the top-left corner of $V$, and in particular is contained in $T$.
Thus all of the rectangles adjacent to and left of $W'_l$ are in $T$, so that choosing $U_{j+1}$ in step $i$ is allowed by the wall shuffle in $W'_l$.
If $V=U_{j+1}$, then $W'_t=W_b$, and we already know that the wall shuffle in $W_b$ requires us to pick $V$ next.  
If $V$ is not $U_{j+1}$, then the left endpoint of $W'_t$ is contained in $T$, so $U_{j+1}$ is the leftmost rectangle adjacent to and below $W'_t$, and thus $U_{j+1}$ does not figure in the wall shuffle in $W'_t$.
In either case, the left edge of $U_{j+1}$ is contained in $T$, and we have proved the claim.

If the bottom edge of $U_{j+1}$ is contained in $T$, then the claim implies that picking $U_{j+1}$ in step $i$ is allowed by Proposition~\ref{rho fibers} and by the wall shuffles in the walls $W'_l$ and $W'_t$.
We can thus argue for $U_{j+1}$ just as we have argued above for $U_j$.
If the bottom edge of $U_{j+1}$ is not contained in $T$, then we find the first $k>j$ such that the bottom edge of $U_k$ is contained in $T$, and start over as above, replacing $U_j$ with $U_k$.
Eventually, we will find a rectangle that can be picked, because the bottom edge of $U_m$ is contained in $T$ and because, as mentioned above, wall shuffles in the walls below $U_m$ and to the right of $U_m$ will never prevent its being picked.
\end{proof}

\begin{prop}\label{gamma fibers}
Suppose $x\covered y$ in the weak order, and let $e$ and $a$ be the adjacent entries that are swapped to convert $y$ to $x$, with $a<e$.
Then $\gamma(x)=\gamma(y)$ if and only if there are entries $b$, $c$, and $d$ in $y$, with $a<b<c<d<e$, such that $b$ and $d$ are on the same side of $ea$, while $c$ is on the other side of $ea$.
\end{prop}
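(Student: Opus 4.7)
The plan is to reduce $\gamma(x) = \gamma(y)$ to two separate conditions: that $\rho(x) = \rho(y)$, and that all wall shuffles agree. Because $\gamma(x)$ is obtained from $\rho(x)$ by recording, for each wall $W$, the subsequence of $x_1\cdots x_n$ consisting of rectangle-numbers with an edge on $W$, and because $x$ and $y$ differ only in the transposition of the adjacent entries $e$ and $a$, the wall shuffles agree if and only if no wall of $\rho(y)$ carries edges of both rectangle $e$ and rectangle $a$. By Proposition~\ref{tBax moves}, $\rho(x) = \rho(y)$ is equivalent to the existence of entries $b, c$ with $a<b<c<e$ on opposite sides of $ea$ in $y$. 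So once $\rho$-preservation is established, the remaining task is the purely geometric question: when do rectangles $e$ and $a$ share a wall in $\rho(y)$?

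For this question, let $L \subseteq \set{a+1,\ldots,e-1}$ denote the set of values whose rectangles lie in $y$ on the left of the pair $ea$, and let $R$ be the set of those on the right of $ea$. The key claim is that rectangles $e$ and $a$ share a wall in $\rho(y)$ if and only if the characteristic function of $L$ on the integer interval $\set{a+1,\ldots,e-1}$ is \emph{monotone}, meaning that one of $L, R$ is an initial segment and the other is the complementary terminal segment (either possibly empty). The nonexistence of a triple $b<c<d$ with $\set{b,d}$ on one side and $c$ on the other is precisely this monotonicity. Combined with the $\rho$-preservation condition, which demands that both $L$ and $R$ be nonempty, the criterion that $\gamma(x) = \gamma(y)$ becomes: both $L$ and $R$ nonempty, and the pattern is non-monotone, which is exactly the $b,c,d$ configuration in the statement.

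To prove the geometric claim I would argue in two directions. In the monotone direction, if, say, all values in $L$ are smaller than all values in $R$, then in the construction of $\rho(y)$ the $L$-rectangles are already present when $e$ is drawn, while the $R$-rectangles are placed afterwards; I expect a direct analysis of the top-right boundary of the partial tiling $T$ just before $e$ and just before $a$ are drawn to show that both $e$ and $a$ acquire an edge on a single common vertical wall separating the $a$-region from the $e$-region. In the converse direction, given a triple $b<c<d$ with $b,d$ on the same side of $ea$ and $c$ on the opposite side, the rectangle numbered $c$ (which on the diagonal lies between those numbered $b$ and $d$) is drawn on the opposite side of $ea$ from $b$ and $d$, and its placement forces a new wall endpoint that breaks any candidate shared wall of $e$ and $a$ into two distinct walls.

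The main obstacle is the converse direction of the geometric claim, since one must track, through the recursive construction of $\rho$, how the rectangle numbered $c$ is positioned relative to the neighborhood of $e$ and $a$, and then argue that its corners create an interruption in the wall structure precisely adjacent to $e$ and $a$. I expect to handle this by case analysis on which side of $ea$ the triple $(b,c,d)$ falls on, using Proposition~\ref{rho fibers} and the left- and bottom-justified structure of the partial tilings $T$ that arise as $\rho(y)$ is built up. Once both directions of the geometric claim are established, combining them with the reduction in the first paragraph and with Proposition~\ref{tBax moves} yields the proposition.
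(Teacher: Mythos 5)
Your skeleton matches the paper's: reduce $\gamma(x)=\gamma(y)$ to the conjunction of $\rho(x)=\rho(y)$ with the statement that $U_a$ and $U_e$ are not adjacent to a common wall of $R'=\rho(y)$, and then show that, given $\rho(x)=\rho(y)$, the no-common-wall condition is equivalent to your non-monotone ($b,c,d$) pattern. However, both halves of the plan are left unproved, and the first is not correct as you state it. The wall shuffle $\sigma_W$ is not the subsequence of all rectangle numbers having an edge on $W$: for a vertical wall it omits the bottom-most rectangle adjacent on the left and the topmost rectangle adjacent on the right (with the analogous omissions for horizontal walls). So ``some wall carries edges of both $U_a$ and $U_e$'' does not by itself force the shuffles of $x$ and $y$ to differ; you must also rule out that $U_a$ and $U_e$ lie on the same side of the common wall (this follows from $\rho(x)=\rho(y)$, since the numbers on one side of a wall occur in a forced monotone order in every compatible permutation, while $a$ and $e$ occur in both orders in $x$ and $y$), and you must show that neither $U_a$ nor $U_e$ is an omitted rectangle of that wall. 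The paper settles the latter using the fact that $U_a$ and $U_e$ are interchangeable at the same step of the construction (both $T\cup U_a$ and $T\cup U_e$ are left- and bottom-justified) together with the diagonal numbering $a<e$; without some such argument, the direction ``shared wall implies $\gamma(x)\neq\gamma(y)$'' is unsupported.

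Second, the geometric equivalence that carries the whole proposition is only announced (``I expect a direct analysis\dots'', ``I expect to handle this by case analysis\dots''), and that analysis is the entire content of the paper's proof. For the direction you identify as the main obstacle ($b,c,d$ pattern implies no shared wall), the paper does not argue that the rectangle numbered $c$ ``breaks'' a candidate wall; it proves the stronger and simpler fact that every point of $U_e$ lies strictly below and strictly to the right of every point of $U_a$, using only that $T\cup U_a$ and $T\cup U_e$ are both left- and bottom-justified, which immediately precludes any common wall. For the other direction (monotone pattern, with $\rho(x)=\rho(y)$, implies a shared wall) the paper examines the staircase boundary of $T$: the bottom-left corners $p_a$ of $U_a$ and $p_e$ of $U_e$ are concave corners of that boundary; a concave corner strictly between them yields the pattern with $b,d$ before $ea$ and $c$ after; otherwise there is a single convex corner $p$ between them, and since the bottom and left edges of both rectangles lie in $T$, either $p$ is the bottom-right corner of $U_a$ or the top-right corner of $U_e$ (which makes them adjacent to the wall through $p$), or else one reads off the pattern with $c$ before and $b,d$ after. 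Note also that the shared wall need not have a predictable orientation a priori (for $y=3412$ with $ea=41$ it is horizontal), so the deferred case analysis is genuinely two-sided. In short: you have the right reduction and the right target statement, but the proof of the target statement --- the actual substance of the proposition --- is missing.
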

\begin{proof}
Both conditions in the proposition imply that $\rho(x)=\rho(y)$, by the definition of $\gamma$ and by Proposition~\ref{rho fibers}.
Throughout the proof, let $R'$ be the diagonal rectangulation $\rho(x)=\rho(y)$.
We claim that $\gamma(x)=\gamma(y)$ if and only if $U_a$ and $U_e$ are not adjacent to any common wall of $R'$.
Indeed, if $U_a$ and $U_e$ are not adjacent to any common wall of $R'$, then $x$ and $y$ must define the same wall shuffles on the walls of $R'$, so $\gamma(x)=\gamma(y)$.
Conversely, suppose $U_a$ and $U_e$ are adjacent to a common wall $W$ of $R'$.
The assumption that $\rho(x)=\rho(y)$ rules out the possibility that $U_a$ and $U_e$ are both on the same side of $W$, so $U_a$ and $U_e$ are on opposite sides of $W$.
Since $U_a$ is chosen immediately before $U_e$ when $R'$ is constructed as $\rho(x)$ but immediately after $U_e$ when constructing $R'$ as $\rho(y)$, we see that $a$ and $e$ are both entries in~$\sigma_W$.
That is, if $W$ is vertical, then neither $U_a$ nor $U_e$ is the bottom-most rectangle adjacent to $W$ on the left, and neither $U_a$ nor $U_e$ is the topmost rectangle adjacent to $W$ on the right.
Similarly, if $W$ is horizontal, neither of the two rectangles are the leftmost rectangle below $W$ nor the rightmost rectangle above $W$.
We conclude that $\gamma(x)\neq\gamma(y)$ and we have proved the claim.

Suppose there are entries $b$, $c$, and $d$ in $y$, with $a<b<c<d<e$, such that $b$ and $d$ precede $ea$, but $c$ follows $ea$.
Let $T$ be the union of the left and bottom edges of $S$ with the rectangles chosen before $U_a$ and $U_e$ when $R'$ is constructed as $\rho(x)$ or $\rho(y)$.
In the construction of $R'$ as $\rho(x)$, the rectangle $U_a$ is chosen next, but in the construction of $R'$ as $\rho(y)$, the rectangle $U_e$ is chosen next.
Thus both $T\cup U_a$ and $T\cup U_e$ are bottom- and left-justified sets.
Therefore, every point of $U_e$ is strictly below and strictly to the right of every point of $U_a$.
See Figure~\ref{fiber fig}.a, ignoring, for now, the labels $p_a$ and $p_e$.
\begin{figure}
\begin{tabular}{cccc}
\includegraphics{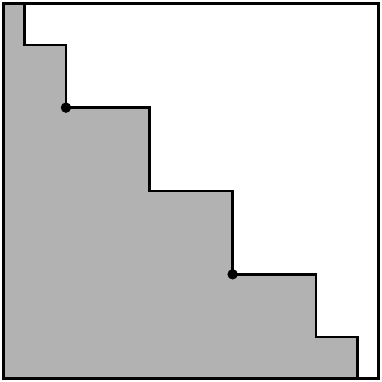}
\begin{picture}(0,0)(114,0)
\put(22,22){$T$}
\put(12,73){$p_a$}
\put(60,25){$p_e$}
\put(24,83){$a$}
\put(36,70){$b$}
\put(48,59){$c$}
\put(60,46){$d$}
\put(72,35){$e$}
\end{picture}
&&&
\includegraphics{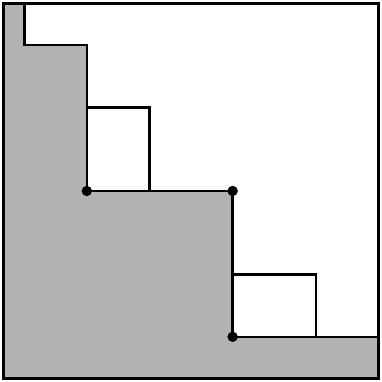}
\begin{picture}(0,0)(114,0)
\put(22,17){$T$}
\put(18,49){$p_a$}
\put(28,59){$U_a$}
\put(60,7){$p_e$}
\put(70,16.5){$U_e$}
\put(70,56){$p$}
\put(36,71.5){$a$}
\put(48,59){$b$}
\put(60,47){$c$}
\put(71,35){$d$}
\put(84,23.5){$e$}
\end{picture}
\\
(a)&&&(b)
\end{tabular}
\caption{Figures illustrating the proof of Proposition~\ref{gamma fibers}}
\label{fiber fig}
\end{figure}
In particular, $U_a$ and $U_e$ are not adjacent to a common wall of $R'$.
By the claim, $\gamma(x)=\gamma(y)$.

Similarly, if there are entries $b$, $c$, and $d$ in $y$, with $a<b<c<d<e$, such that $c$ precedes $ea$, but $b$ and $d$ follow $ea$, we see that every point of $U_e$ is strictly below and strictly to the right of every point of $U_a$. 
See Figure~\ref{fiber fig}.b, ignoring the labels $p$, $p_a$, and $p_e$.
In particular, $U_a$ and $U_e$ are not adjacent to a common wall of $R'$, so $\gamma(x)=\gamma(y)$.

Conversely, suppose $\gamma(x)=\gamma(y)$.
Let $T$ be as above.
For every concave corner $p$ of the top-right boundary of $T$, there is a rectangle of $U$ whose bottom-left corner is $p$.
Let $p_a$ be the bottom-left corner of $U_a$ and let $p_e$ be the bottom-left corner of $U_e$.
Both $p_a$ and $p_e$ are concave corners of the boundary of $T$.
There are two possibilities:
The first is that, looking from top-left to bottom-right at the concave corners of the boundary of $T$, there is some concave corner between $p_a$ and $p_e$.
In this case, there exist entries $b$, $c$, and $d$ in $y$, with $a<b<c<d<e$, such that $b$ and $d$ are before $ea$, while $c$ is after $ea$, as illustrated by Figure~\ref{fiber fig}.a.
The second possibility is that, from top-left to bottom-right, there are no other concave corners between $p_a$ and $p_e$.
Thus there is a single convex corner $p$ of $T$ between $p_a$ and $p_e$.
Since $R'$ equals both $\rho(x)$ and $\rho(y)$, both $U_a$ and $U_e$ have their bottom and left edges contained in $T$.
If $p$ is the bottom-right corner of $U_a$, or if $p$ is the top-right corner of $U_e$, then $U_a$ and $U_e$ are adjacent to a common wall.  
By the claim, this is a contradiction to the supposition that $\gamma(x)=\gamma(y)$.
Thus there exist entries $b$, $c$, and $d$ in $y$, with $a<b<c<d<e$, such that $c$ is before $ea$, while $b$ and $d$ are after $ea$, as illustrated in Figure~\ref{fiber fig}.b.
\end{proof}

Proposition~\ref{gamma surj} asserts that every fiber of $\gamma$ is nonempty.  
The following proposition characterizes the fibers more exactly, and completes the proof of Theorem~\ref{main}.
\begin{prop}\label{gamma bottoms}
Each fiber of $\gamma$ is a $\Gamma$-class.
In particular, each fiber of $\gamma$ contains a unique $2$-clumped permutation.
\end{prop}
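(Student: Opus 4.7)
The plan is to prove that the fiber equivalence relation $\gamma(x) = \gamma(y)$ coincides with $\Gamma$-equivalence on $S_n$. Once this is established, the ``in particular'' clause follows from Proposition~\ref{Theta g props}(\ref{Theta g bottoms}), which identifies the $2$-clumped permutations as the unique minima of $\Gamma$-classes. The starting observation is that the characterization of $\gamma$-contracted covers in Proposition~\ref{gamma fibers} matches verbatim the characterization of $\Gamma$-contracted covers in Proposition~\ref{Theta g props}(\ref{Theta g moves}): for a cover $x \covered y$ in the weak order, $\gamma(x) = \gamma(y)$ holds if and only if $x \equiv y$ modulo $\Gamma$.

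One inclusion is immediate. Since $\Gamma$ is a lattice congruence on the finite weak order, each $\Gamma$-class is an interval, and any two of its elements are joined by a saturated chain inside the class. Every cover in such a chain is $\Gamma$-contracted, hence $\gamma$-contracted, so $\gamma$ is constant on the class. Therefore each fiber of $\gamma$ is a union of $\Gamma$-classes.

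For the reverse inclusion, I would first show that $\gamma^{-1}(R)$ is convex in the weak order. Assume $x \le y \le z$ with $\gamma(x) = \gamma(z) = R$. Proposition~\ref{tBax moves} forces $\rho(y) = \rho(x) = R'$, the diagonal rectangulation underlying $R$. For each wall $W$ of $R'$, the wall shuffle $\sigma_W$ of a permutation is the subsequence on a fixed index set $T_W$, and so is determined by the inversion set restricted to $\binom{T_W}{2}$. Because $\inv(x) \subseteq \inv(y) \subseteq \inv(z)$ and the restrictions to $\binom{T_W}{2}$ agree at $x$ and $z$, they agree at $y$ too, giving $\sigma_W(y) = \sigma_W(x)$ for every $W$ and hence $\gamma(y) = R$.

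By convexity, every cover inside $\gamma^{-1}(R)$ is $\gamma$-contracted, hence $\Gamma$-contracted. To finish, it suffices to show that $\gamma^{-1}(R)$ is closed under the weak-order meet and join: a convex sublattice of a finite lattice is an interval $[u,v]$, and every cover inside such an interval is $\Gamma$-contracted, so the interval lies in a single $\Gamma$-class, which must then equal $\gamma^{-1}(R)$. I expect this closure to be the main technical obstacle, since weak-order meets and joins do not preserve arbitrary subsequences: the meet $x \wedge y$ has inversion set equal to the largest biclosed subset of $\inv(x) \cap \inv(y)$, and one must exploit the specific structure of the index sets $T_W$ coming from walls of a shared rectangulation to show that the wall shuffles of $x \wedge y$ and $x \vee y$ still agree with those of $x$ and $y$ whenever $\gamma(x) = \gamma(y)$.
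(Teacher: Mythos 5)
Your first inclusion (each fiber of $\gamma$ is a union of $\Gamma$-classes) is exactly the paper's, and your convexity argument is essentially correct: the wall shuffles of a permutation in $\rho^{-1}(R')$ are determined by the restriction of its inversion set to pairs of rectangle numbers adjacent to each wall, so they interpolate along weak-order intervals (though the step $\rho(y)=\rho(x)$ for $x\le y\le z$ needs order-convexity of the fibers of $\rho$, which comes from their being classes of a lattice congruence, not from Proposition~\ref{tBax moves} by itself, which only concerns covers). The genuine gap is at exactly the point you flag and defer: convexity of $\gamma^{-1}(R)$ plus the fact that every cover inside it is $\Gamma$-contracted does not make the fiber a single $\Gamma$-class, because a convex subset need not be connected by covers lying inside it; you need closure under $\meet$ and $\join$ (or some other connectivity statement), and this is left unproved. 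Since the conclusion you are after --- that the fiber is a $\Gamma$-class, hence an interval --- already contains that closure, your outline postpones essentially the entire content of the proposition to the missing step.

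The paper closes this gap with a direct connectivity argument in place of sublattice closure. Given distinct $x,y$ with $\gamma(x)=\gamma(y)$, let $i$ be the first position where they differ and let $k>i$ with $y_k=x_i$. Because $x$ witnesses that the rectangle numbered $x_i$ can be chosen at step $i$ consistently with Proposition~\ref{rho fibers} and with all wall shuffles, the entry $y_k$ participates in no wall shuffle with any of $y_i,\dots,y_{k-1}$; hence sliding $y_k$ leftward one position at a time gives a chain of weak-order covers on which $\gamma$ is constant. Each such cover is $\Gamma$-contracted by Propositions~\ref{gamma fibers} and~\ref{Theta g props}, so $y$ is $\Gamma$-congruent to the permutation $y'$ agreeing with $x$ through position $i$, and induction on $n-i$ finishes. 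To salvage your route you would need an argument of comparable substance showing that $\gamma(x)=\gamma(y)$ implies $\gamma(x\meet y)=\gamma(x\join y)=\gamma(x)$; the paper's entry-sliding argument is the natural substitute for that step.
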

\begin{proof}
By Proposition~\ref{Theta g props}.\ref{Theta g moves} and Proposition~\ref{gamma fibers}, the fibers of $\gamma$ are unions of $\Gamma$-classes.
Suppose $x$ and $y$ are distinct permutations with $\gamma(x)=\gamma(y)$.
Then $\rho(x)=\rho(y)$ and $x$ and $y$ are consistent with the same set of wall shuffles.
We will show that $x$ and $y$ are congruent modulo $\Gamma$.
Let $i$ be the smallest index such that $x_i\neq y_i$.
We argue by induction on $n-i$.

There is some $k>i$ such that $y_k=x_i$.
Since $\gamma(x)=\gamma(y)$, either the rectangle numbered $x_i$ or the sequence of rectangles numbered $y_iy_{i+1}\cdots y_k$ can be chosen next, consistent with the requirements of Proposition~\ref{rho fibers} and with the wall shuffles.
We conclude that the entry $y_k=x_i$ does not participate in any wall shuffles with any of the entries $y_iy_{i+1}\cdots y_{k-1}$.

Consider the sequence of permutations starting with $y$ and moving the entry $y_k$ to the left one place at a time, without changing the relative positions of the other entries, with the final entry $y'$ in the sequence having $y_k$ in position $i$.
Then~$\gamma$ is constant on the sequence.
Since each pair of adjacent permutations in the sequence is a covering pair in the weak order, each pair is related as described in Proposition~\ref{gamma fibers}.
But then Proposition~\ref{Theta g props}.\ref{Theta g moves} says that the entire sequence is contained in one $\Gamma$-class, so that in particular $y'$ and $y$ are congruent modulo $\Gamma$.
Since $\gamma(x)=\gamma(y')$ and $x$ and $y'$ agree in positions $1$ through $i$, by induction we conclude that $x$ and $y'$ are congruent modulo $\Gamma$.
Thus $x$ and $y$ are congruent modulo~$\Gamma$.

We have shown that each fiber of $\gamma$ is a $\Gamma$-class.
The second assertion of the proposition follows by Proposition~\ref{Theta g props}.\ref{Theta g bottoms}.
\end{proof}

\section{Remarks on enumeration}
A pleasant formula was obtained in~\cite{CGHK} for the number of Baxter permutations in $S_n$:
\[B(n)=\binom{n+1}{1}^{-1}\binom{n+1}{2}^{-1}\,\,\sum_{k=1}^n\binom{n+1}{k-1}\binom{n+1}{k}\binom{n+1}{k+1}.\]
This formula applies to twisted Baxter (i.e.\ $1$-clumped) permutations and to diagonal rectangulations as well.
In this section, we make several remarks on the problem of enumerating generic rectangulations or $2$-clumped permutations.
In particular, we give some indications that the enumeration of $2$-clumped permutations will be harder than the enumeration of $1$-clumped permutations.

\begin{remark}\label{Conant and Michaels}
One way to enumerate generic rectangulations is by specializing a formula of Conant and Michaels~\cite{ConMic}.
This formula is a recursion, with signs, counting rectangulations according to the number of crosses.
Thanks to Jim Conant for providing the results of his recursive calculations which verify and extend Table~\ref{comput table}.
\end{remark}

\begin{remark}\label{my computations}
Another approach to enumerating $2$-clumped permutations is to apply the key idea from~\cite{CGHK}.
This approach appears not to lead to a formula for the number of generic rectangulations, but is useful computationally, as we now explain.

Suppose $x\in G_n$.
For each entry $a$ in $x$, let $\beta(a)=\set{b\in [a+1,n]: b\mbox{ is before }a}$.
Then $n+1$ can be placed before $a$ in $x$ to obtain another $2$-clumped permutation if and only if one of the following holds:
$\beta(a)=\emptyset$, $\beta(a)=[a+1,n]$, $\beta(a)=[a+1,c]$ for some $c$ with $a+1\le c<n$, or $\beta(a)=[d,n]$ for some $d$ with $a+1<d\le n$.
Notice that if $a$ satisfies none of these requirements, then even after $n+1$ is inserted elsewhere to obtain a permutation $x'\in G_{n+1}$, the entry $a$ in $x'$ still satisfies none of the requirements.
Notice also that $n+1$ can be inserted after all of the entries of $x$ to obtain a permutation in $G_{n+1}$.

Accordingly, we encode a $2$-clumped permutation by a string of letters as follows.
Read through the elements of $x$ from left to right, and for each element $a$, write a letter in the string as follows:

\begin{tabular}{lll}
\textbf{n}&(for ``null'' or ``$n$'')& if $a=n$.  Assume $a\neq n$ in the following cases.\\
\textbf{e}& (for ``empty'') &if $\beta(a)=\emptyset$.\\
\textbf{f}& (for ``full'') &if $\beta(a)=[a+1,n]$.\\
\textbf{l}& (for ``lower'') &if $\beta(a)=[a+1,c]$ for some $c$ with $a+1\le c<n$.\\
\textbf{u}&(for ``upper'') &if $\beta(a)=[d,n]$ for some $d$ with $a+1<d\le n$.
\end{tabular}\\
If none of these apply, then write nothing.


For example, for each of the permutations $2413$, $4213$, $3124$ and $3142$, the symbol~$5$ can be inserted anywhere except before the symbol $1$.
The sequences of letters for these permutations are respectively \textbf{en}$\,\cdot\,$\textbf{f}, \textbf{nu}$\,\cdot\,$\textbf{f}, \textbf{e}$\,\cdot\,$\textbf{ln}, and \textbf{e}$\,\cdot\,$\textbf{nf}, with a dot ``$\cdot$'' indicating an entry in the permutation that does not produce a letter.
The respective strings are \textbf{enf}, \textbf{nuf}, \textbf{eln}, and again \textbf{enf}.

If we place the symbol $n+1$ before $a$ in $x$ or if we place $n+1$ after all entries of $x$, we can construct the string of letters corresponding to the new permutation $x'$ by the following procedure.
Insert the letter \textbf{n} in the string before the letter corresponding to $a$ or at the end of the string and alter letters before the insertion according to the following rule: \textbf{n} becomes \textbf{e}, \textbf{e} is unchanged, \textbf{f} becomes \textbf{l}, \textbf{l} is unchanged, and \textbf{u} is deleted.
Alter letters occurring after the insertion as follows: \textbf{n} becomes \textbf{f}, \textbf{e} becomes \textbf{u}, \textbf{f} is unchanged,  \textbf{l} is deleted, and \textbf{u} is unchanged.

Now we can dispense with permutations entirely and simply insert letters into strings, counting the resulting strings by multiplicities.
We start with the string \textbf{n}, encoding the permutation $1\in G_1$.
Inserting before or after the one letter in the string, we obtain \textbf{en} and \textbf{nf}, corresponding to the permutations $G_2=\set{12,21}$.
Inserting into these two strings, we obtain the strings \textbf{een} (for $123$), \textbf{enf} twice (for $132$ and $231$), \textbf{nuf} (for $312$), \textbf{eln} (for $213$), and \textbf{nff} (for $321$).
In the next round of insertions, deletions of letters come into play, so that for example, inserting \textbf{n} after the \textbf{e} in  \textbf{eln}, we obtain \textbf{enf}.
This corresponds to inserting $4$ after $2$ in $213$ to obtain $2413$.
In all, there are 15 strings which represent the 24 permutations in $G_4$.

The values shown in Table~\ref{comput table} are the results of a simple computer program that generates all strings and keeps track of multiplicities.


In contrast, representing $1$-clumped permutations (i.e.\ twisted Baxter permutations) by strings leads to an enumeration formula.
In this case the locations where $n+1$ can be inserted are the locations labeled \textbf{n}, \textbf{e}, or \textbf{f}, with the same definitions as above.  
When \textbf{n} is inserted into the string, the remainder of the string is altered as follows:
Before the insertion, \textbf{n} becomes \textbf{e}, \textbf{e} is unchanged, and \textbf{f} is deleted.
After the insertion, \textbf{n} becomes \textbf{f}, \textbf{e} is deleted, and \textbf{f} is unchanged.
All of the strings are of the form $\textbf{e}^i\textbf{n}\textbf{f}\,^j$, for $i,j\ge 0$ and $i+j\le n-1$.
Define $G(n,i,j)$ to be the multiplicity of the string $\textbf{e}^i\textbf{n}\textbf{f}\,^j$ for $1$-clumped permutations in $S_n$.
Up to reindexing in $n$, the numbers $G(n,i,j)$ coincide with the numbers $T_n(i,j)$ in \cite{CGHK}, and the obvious recurrence on $G(n,i,j)$ coincides with the recurrence on $T_n(i,j)$.
This recurrence can be solved as in \cite{CGHK}, or by the generating function method of~\cite{B-Mel}.
In particular, the generating tree for the twisted Baxter permutations is isomorphic to the generating tree for Baxter permutations.
Indeed, the original proof \cite{West pers} that twisted Baxter permutations biject with Baxter permutations proceeded by establishing this isomorphism of generating trees.
\end{remark}

\begin{remark}\label{formula difficulties}
Mallows \cite{Mallows} gave a combinatorial interpretation for the terms in formula for $B(n)$ by pointing out that the term indexed by $k$ counts Baxter permutations with $k$ \emph{ascents} (or \emph{rises}).
There are two dual ways to define ascents:
We will say that a \emph{right ascent} is a pair of adjacent entries such that the left entry in the pair is smaller than the right entry in the pair.
A \emph{left ascent} is a pair of entries $i$  and $i-1$ with $i-1$ appearing before $i$ in the permutation.
We can similarly define \emph{right descents} (left entry in the pair larger) and \emph{left descents} ($i-1$ appearing after $i$).
Recall that the Baxter permutation are the permutations avoiding $3$-$14$-$2$ and $2$-$41$-$3$.
It is easy to see that a given permutation is a Baxter permutation if and only if its inverse is a Baxter permutation.
(See e.g.\ \cite[Corollary~4.2]{rectangle}.)
Thus, when counting Baxter permutations according to the number of ascents, it does not matter whether we use right ascents or left ascents.
Furthermore, it is immediate that a permutation is a Baxter permutation if and only if its reverse permutation is also a Baxter permutation.
Thus the formula for Baxter permutations with a fixed number of ascents is the same as the formula for Baxter permutations with a fixed number of descents.
It is easy to see that the number of ascents in a permutation~$x$ equals the number of vertical walls in the diagonal rectangulation $\rho(x)$.
Thus the formula for $B(n)$ counts diagonal rectangulations according to the number of vertical walls.

The number of left ascents of $x$ also equals the number of vertical walls in the generic rectangulation $\gamma(x)$ and the number of left descents of $x$ equals the number of horizontal walls.
Thus by the symmetry of the rectangulations, counting $2$-clumped permutations by left descents is equivalent to counting $2$-clumped permutations by left ascents.
However, the inverse of a $2$-clumped permutation is not necessarily a $2$-clumped permutation, so it matters whether we take the left or right definitions of descents or ascents.
Thus there are at least three reasonable statistics by which to count:  left ascents/descents, right ascents, or right descents.
Computations show that these three statistics are distributed differently, and suggest the following conjecture:
\begin{conj}\label{right desc conj}
Fix $k\ge 0$.
Then for $n\ge 1$, the number of $2$-clumped permutations in $S_n$ with exactly $d$ right descents is a polynomial $p_k(n)$ of degree $3d$ and leading coefficient 
\[\prod_{i=1}^d\frac{2}{i(i+1)(i+2)}=\frac{2^{d+1}}{d!(d+1)!(d+2)!}.\]
\end{conj}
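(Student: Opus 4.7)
My plan is to proceed by induction on $d$, combined with a structural decomposition of $2$-clumped permutations with $d$ right descents. The base case $d=0$ is immediate: the identity is the unique permutation in $S_n$ with no right descents, so the count equals $1$, a degree-$0$ polynomial with leading coefficient $1$ (the empty product), as claimed.

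For the inductive step, I would parametrize a $2$-clumped permutation with $d$ right descents by three layers of data: (i) the $d$ descent pairs $(e_j,a_j)$ with $a_j<e_j$; (ii) the clump type at each descent (one clump on a chosen side, or two clumps separated by a chosen interior split point); and (iii) the placement of the remaining values into the $d+1$ ascending runs, consistently with the clump constraints. For each fixed combinatorial type, the count becomes a multi-sum over the descent-pair values; using elementary polynomial identities or finite-difference arguments, each such sum should be expressible as a polynomial in $n$ of degree at most $3d$. The dominant contribution comes from the ``generic'' type in which every descent uses two clumps and the descent intervals $(a_j,e_j)$ are as wide and as non-interacting as possible: each such descent contributes $n^3$ in leading order (two values for the descent pair plus one for the split point), and summing over consistent orderings of the $d$ descents yields a product in which the $i$-th factor matches the Beta integral
\[
\int_0^1 t^{i-1}(1-t)^2\,dt \;=\; \frac{2}{i(i+1)(i+2)},
\]
reflecting how much ``room'' remains for the $i$-th descent after the preceding $i-1$ have been placed.

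As a consistency check on the leading coefficient, observe that the conjectured value $\frac{2^{d+1}}{d!(d+1)!(d+2)!}$ equals $2^d$ times the leading coefficient $\frac{2}{d!(d+1)!(d+2)!}$ of the exact count of Baxter (equivalently $1$-clumped) permutations in $S_n$ with $d$ descents, which can be read off from the formula of \cite{CGHK}. The factor $2^d$ encodes the two-fold clump-orientation freedom acquired at each descent in passing from $1$-clumped to $2$-clumped. This parallel also suggests an alternative approach: construct a leading-order bijection from $2$-clumped permutations with $d$ right descents to pairs consisting of a Baxter permutation with $d$ descents and an orientation vector in $\{+,-\}^d$, with the ``exceptional'' fibers of the bijection absorbing only lower-order terms.

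The main obstacle I anticipate is controlling the interaction between descents. The clump conditions at different descents can share entries lying in overlapping intervals $(a_j,e_j)$, so the constraints are not independent, and several degenerate configurations---nested or overlapping intervals, adjacent descents sharing an entry, and ``narrow'' descents with $e_j-a_j<3$---must each be shown to contribute only terms of degree strictly less than $3d$. A secondary subtlety is that the right-descent statistic is not invariant on the fibers of $\gamma$ (every adjacent swap toggles the right-descent count, whereas Proposition~\ref{gamma fibers} requires four intermediate values for two adjacent permutations to lie in the same fiber), so the proof must work directly with permutations rather than passing through the rectangulation side. Once polynomiality and the degree bound are established, the leading coefficient should fall out of the dominant ``generic'' configuration analysis.
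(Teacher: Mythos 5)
This statement is not proved in the paper: it is stated as a conjecture, supported only by computation, and the author explicitly says it can be verified for small values of $d$ but that no general proof or formula is known. So there is no proof of the paper's to compare yours against; the relevant question is whether your proposal would close the gap, and as written it does not. What you have is a plausible research plan, not an argument. The crucial assertions are exactly the ones left unestablished: (1) that for fixed $d$ the count of $2$-clumped permutations in $S_n$ with $d$ right descents is a polynomial in $n$ at all (your ``multi-sum over descent-pair values'' is asserted to be polynomial, but the clump constraints at different descents share values in overlapping intervals $(a_j,e_j)$, so the sum does not factor, and you yourself flag this interaction as the main obstacle); (2) that the degree is exactly $3d$, which requires showing both the upper bound and that all degenerate configurations (nested or overlapping descent intervals, narrow descents, shared entries) contribute strictly lower degree; and (3) the leading coefficient. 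For (3), the Beta-integral identity $\int_0^1 t^{i-1}(1-t)^2\,dt = \tfrac{2}{i(i+1)(i+2)}$ is offered as an analogy with no framework (no rescaling limit, no identification of which continuous configuration space the leading term counts, no justification that the ``generic two-clump'' type dominates and that its count converges to that product). Likewise, the proposed ``leading-order bijection'' to pairs (Baxter permutation with $d$ descents, vector in $\{+,-\}^d$) is not constructed, and even if it were, controlling its exceptional fibers is precisely the same degree-bound problem again.

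Two smaller points. Your base case and your arithmetic consistency check are correct: the $d=0$ count is identically $1$, and the conjectured coefficient $\tfrac{2^{d+1}}{d!(d+1)!(d+2)!}$ is indeed $2^d$ times the leading coefficient $\tfrac{2}{d!(d+1)!(d+2)!}$ one extracts from the term of the Chung--Graham--Hoggatt--Kleiman formula counting Baxter permutations with $d$ descents; this is a useful sanity check and a reasonable heuristic for the factor $2^d$, but it is not evidence for polynomiality or for the degree. Your observation that the right-descent statistic is not constant on fibers of $\gamma$ (so one cannot simply transport the problem to rectangulations the way the paper does for left ascents/descents and wall counts) is consistent with the paper's discussion and correctly identifies why this statistic is the one for which a polynomial answer is plausible yet hard to reach through the bijection. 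If you could actually carry out the degree and leading-term analysis you sketch, you would be resolving an open problem, not reproducing an argument from the paper; as it stands, the proposal should be regarded as incomplete.
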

The polynomial $p_k(n)$ must have factors $(n-1)(n-2)\cdots(n-d)$, so the point is to determine the polynomial $\tilde{p}_k(n)$ of degree $2d$ that results when these factors are taken out.
The first few polynomials appear to be $\tilde{p}_0(n)=1$,
\begin{align*}
\tilde{p}_1(n)&=(n^2-2n+3)/3,\\
\tilde{p}_2(n)&=(5n^4-36n^3+142n^2-279n+270)/180,\mbox{ and}\\
\tilde{p}_3(n)&=(14n^6-213n^5+1688n^4-8361n^3+26000n^2-46884n+37800)/15120.
\end{align*}
It should be emphasized that the point of the conjecture is to find a formula enumerating all $2$-clumped permutations.
The conjecture can be proved for some small values of $k$, and proofs for additional values of $k$ are of interest only to the extent to which they lead to a conjecture on the general form of $p_k(n)$.

The other two statistics (left ascents/descents and right ascents) do not lead to polynomial formulas.
In particular, counting $2$-clumped permutations by left ascents, or equivalently counting generic rectangulations by the number of vertical walls, appears to be hard.
\end{remark}

\end{document}